\documentclass{amsart}
\usepackage[english]{babel}
\usepackage{amsthm,amsmath}
\usepackage{inputenc}
\usepackage{amssymb}
\usepackage{graphicx}
\usepackage{color}
\tolerance=5000 \topmargin -1cm \oddsidemargin=0,5cm
\evensidemargin=-0,2cm \textwidth 15.6cm \textheight 24cm
\linespread{1.0}
\newtheorem{thm}{Theorem}[section]
\newtheorem{cor}[thm]{Corollary}
\newtheorem{lem}[thm]{Lemma}
\newtheorem{prop}[thm]{Proposition}
\newtheorem{defn}[thm]{Definition}

\numberwithin{equation}{section}

\begin{document}
\title[Leibniz algebras whose semisimple part is related to $sl_2$.]
{Leibniz algebras whose semisimple part is related to $sl_2$.}

\author{L.M. Camacho, S. G\'{o}mez-Vidal, B.A. Omirov and I.A. Karimjanov}
\address{[L.M. Camacho --- S. G\'{o}mez-Vidal] Dpto. Matem\'{a}tica Aplicada I.
Universidad de Sevilla. Avda. Reina Mercedes, s/n. 41012 Sevilla.
(Spain)} \email{lcamacho@us.es --- samuel.gomezvidal@gmail.com}
\address{[B.A. Omirov --- I.A. Karimjanov] Institute of Mathematics and Information Technologies
of Academy of Uzbekistan, 29, Do'rmon yo'li street., 100125, Tashkent (Uzbekistan)}
\email{omirovb@mail.ru ---iqboli@gmail.com}

%
\thanks{This work has been funded by Mathematics Institute and V Research Plan of Sevilla University by IMU/CDC-program.}%

\begin{abstract}
In this paper we identify the structure of complex finite-dimensional Leibniz algebras with associated Lie algebras $sl_2^1\oplus sl_2^2\oplus \dots \oplus sl_2^s\oplus R,$ where $R$ is a solvable radical. The classifications of such Leibniz algebras in the cases $dim R=2, 3$ and $dim I\neq 3$ have been obtained. Moreover, we classify Leibniz algebras with $L/I\cong sl_2^1\oplus sl_2^2$ and some conditions on ideal $I=id<[x,x] \ | \ x\in L>.$
\end{abstract}

\maketitle \textbf{Mathematics Subject Classification 2010}:
17A32, 17A60, 17B10, 17B20.

\textbf{Key Words and Phrases}: Leibniz algebra, simple algebra $sl_2$, direct sum of algebras, right module, irreducible module.

\section{Introduction.}

The notion of Leibniz algebra was first introduced by Loday in
\cite{loday}, \cite{lodpir} as a non-antisymmetric generalization
of Lie algebra. During the last $20$ years the theory of Leibniz
algebras has been actively studied and many results of the theory
of Lie algebras have been extended to Leibniz algebras. A lot of
papers have  been devoted to the description of
finite-dimensional nilpotent Leibniz algebras \cite{AOR1},
\cite{Ayu} so far. However, just a few works are related to the
semisimple part of Leibniz algebras \cite{Sam}, \cite{Bar},
\cite{Rakh1}.

We know that an arbitrary Lie algebra can be
decomposed into a semidirect sum of the solvable radical and its semisimple subalgebra (Levi's Theorem \cite{Jac}).
According to the Cartan-Killing theory, a semisimple Lie algebra can be represented as a direct sum of simple ideals,
which are completely classified \cite{Jac}.

In a recent study, Barnes has proved an analogue of Levi's Theorem for the case of Leibniz algebras \cite{Bar}.
Namely, a Leibniz algebra is decomposed into a semidirect sum of its solvable radical and a semisimple Lie algebra.

The inherent properties of non-Lie Leibniz algebras imply that the
subspace spanned by squares of elements of the algebra is a
non-trivial ideal (denoted by $I$). Moreover, the ideal
$I$ is abelian and hence, it is contained in the solvable radical.
Thanks to result of Barnes in order to describe Leibniz algebras it is enough to investigate the
relationship between products of a semisimple Lie algebra and the
radical (see \cite{Sam}, \cite{Rakh} and  \cite{Rakh1}).

The present work devoted to describing the structure of Leibniz
algebras with the associated Lie algebras isomorphic to $sl_2^1\oplus
sl_2^2\oplus \dots \oplus sl_2^s\oplus R$ with $I$ an irreducible right $sl_2^k$-module for some $k.$ Since the description of such Leibniz algebras is very complicated,
we have focused on Leibniz algebras with semisimple part $sl_2^1\oplus sl_2^2$ under some conditions on the ideal $I$.

In order to achieve our goal we organize the paper as follows. In
Section 2, we give some necessary notions and preliminary results
about Leibniz algebras with associated Lie algebra $sl_2\dot{+}
R.$ Section 3 is devoted to the study of the structure of Leibniz
algebras, whose semisimple part is a direct sum of copies of
$sl_2$ algebras and with some conditions on the ideal $I.$ In
Section 4, we classify Leibniz algebras whose semisimple part is a
direct sum $sl_2^1\oplus sl_2^2\ $ and $I$ is decomposed into a
direct sum of two irreducible modules $I_{1,1}, I_{1,2}$ over
$sl_2^1$ such that $dimI_{1,1}=dimI_{1,2}.$ The description of the structure of Leibniz algebras with associated Lie algebra $sl_n$ and $dim I=1,2$ is obtained in Section 5.

Throughout the work, vector spaces and algebras are
finite-dimensional over the field of complex numbers. Moreover, in
the table of multiplication of an algebra the omitted products are
assumed to be zero. We shall use the following symbols: $+$,
$\oplus$ and $\dot{+}$ for notations of the direct sum of the
vector spaces, the  direct and semidirect sums of algebras,
respectively.

\section{Preliminaries}

In this section we give some necessary definitions and preliminary results.

\begin{defn} \cite{loday} An algebra $(L,[\cdot,\cdot])$ over a field $F$ is called a
Leibniz algebra if for any $x,y,z\in L$ the so-called Leibniz identity
$$[x,[y,z]]=[[x,y],z] - [[x,z],y]$$ holds.
\end{defn}

Let $L$ be a Leibniz algebra and let $I=ideal< [x,x]\  | \ x\in L
>$ be an ideal of $L$ generated by all squares. The natural
epimorphism $\varphi  : L \rightarrow   L/I$ determines the
associated Lie algebra $L/I$ of the Leibniz algebra $L.$ It is
clear that ideal $I$ is the minimal ideal with respect to the
property that the quotient algebra by this ideal is a Lie algebra.

It is noted that in \cite{Bar} the ideal $I$ coincides with the
space spanned by squares of elements of an algebra.


According to \cite{Jac} there exists a unique (up to isomorphism) $3$-dimensional simple Lie algebra with the following table of multiplication:
$$sl_2: \quad  [e,h]=-[h,e]=2e, \qquad [h,f]=-[f,h]=2f, \qquad [e,f]=-[f,e]=h,$$
The basis $\{e,f,h\}$ is called the {\it canonical basis}.

In \cite{Rakh}, Leibniz algebras whose quotient Lie
algebra is isomorphic to $sl_2$ are described. Let us present a
Leibniz algebra $L$ with the table of multiplication in a basis
$\{e, f, h, x_{0}^1, \dots, x_{t_1}^1, $ $x_0^2, \dots, x_{t_2}^2, \dots, x_0^p, \dots,$ $ x_{t_p}^p\}$  and the
quotient algebra $L/I$ is $sl_2$:
$$\begin{array}{lll}
\, [e,h]=-[h,e]=2e, & [h,f]=-[f,h]=2f, &[e,f]=-[f,e]=h, \\
\, [x_k^{j},h]=(t_j-2k)x_k^{j}, & 0 \leq k \leq t_j ,&\\
\, [x_k^{j},f]=x_{k+1}^{j},  & 0 \leq k \leq t_j-1, & \\
\, [x_k^{j},e]=-k(t_j+1-k)x_{k-1}^{j}, & 1 \leq k \leq t_j. &\\
 \end{array}$$
where $L=sl_2 + I_1+I_2+ \dots +I_p$ and $I_j=\langle
x_1^j,\dots,x_{t_j}^j \rangle, \, 1\leq j \leq p.$

The last three types of products of the above table of
multiplication are characterized as an irreducible $sl_2$-module
with the canonical basis of $sl_2$ \cite{Jac}.

Now we give the notion of semisimplicity for Leibniz algebras.
\begin{defn} \cite{Sam} A Leibniz algebra is called semisimple if its
maximal solvable ideal is equal to $I$. \end{defn}

Since in the Lie
algebras case the ideal $I$ is equal to zero, this definition also
agrees with the definition of semisimple Lie algebra.

Although Levi's Theorem is proved for the left Leibniz algebras
\cite{Bar}, it is also true for right Leibniz algebras (here we
consider the right Leibniz algebras).

\begin{thm}\cite{Bar} (Levi's Theorem).\label{t2}
Let $L$ be a finite dimensional Leibniz algebra over a field of
characteristic zero and $R$ be its solvable radical. Then there
exists a semisimple subalgebra $S$ of $L$, such that
$L=S\dot{+}R.$
\end{thm}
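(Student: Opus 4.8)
The plan is to bootstrap the classical Levi decomposition for Lie algebras up to the Leibniz setting, exploiting that the whole discrepancy between the two theories is concentrated in the abelian ideal $I$. First I would recall the two structural facts about $I=ideal\langle[x,x]\mid x\in L\rangle$ already recorded in the preliminaries: $I$ is abelian and $I\subseteq R$. Consequently $R/I$ is the solvable radical of the Lie algebra $L/I$, and applying the classical Levi theorem to $L/I$ produces a semisimple Lie subalgebra $\bar S\le L/I$ with $L/I=\bar S\,\dot{+}\,R/I$.

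Next I would pass to the preimage $M=\varphi^{-1}(\bar S)$ under the natural epimorphism $\varphi:L\to L/I$. Then $M$ is a subalgebra of $L$ containing $I$, with $M/I\cong\bar S$ semisimple and $I$ abelian; moreover $L=M+R$ and $M\cap R=I$, since $\varphi(M\cap R)\subseteq\bar S\cap(R/I)=0$ forces $M\cap R\subseteq I$, while $I\subseteq M$ and $I\subseteq R$. At this point the theorem reduces to producing a \emph{semisimple subalgebra} $S\le M$ with $M=S\,\dot{+}\,I$: granting this, $L=M+R=(S\dot{+}I)+R=S\dot{+}R$ because $I\subseteq R$, and $S\cap R\subseteq S\cap(M\cap R)=S\cap I=0$, so $S$ is the desired Levi factor and $R$ its radical.

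The heart of the argument is therefore splitting the abelian extension of $\bar S$ by $I$ inside $M$. I would choose any linear section $\sigma:\bar S\to M$ of $\varphi|_M$ and measure its failure to be an algebra homomorphism by a $2$-cocycle with values in $I$. Because $I$ is abelian, the only nontrivial structure on $I$ comes from the right action of $M$: setting $y=z$ in the Leibniz identity gives $[x,[y,y]]=0$, so left multiplication by squares is trivial and $I$ is a genuine $\bar S$-module. Thus the obstruction lives in $H^2(\bar S,I)$, which vanishes for the semisimple $\bar S$ by Weyl's complete reducibility theorem together with Whitehead's second lemma; correcting $\sigma$ by the resulting coboundary yields the homomorphism whose image is $S$. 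The main obstacle is precisely this cohomological step: one must verify carefully that for an abelian kernel the Leibniz splitting obstruction collapses onto the Chevalley--Eilenberg complex of $\bar S$ with coefficients in the module $I$, taking proper account of the fact that a Leibniz algebra a priori carries both a left and a right action, only the latter of which survives on $I$.
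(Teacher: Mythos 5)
Your reduction is exactly the right skeleton, and in fact it is the skeleton of the proof in the cited source: the paper itself gives no proof of Theorem \ref{t2} (it quotes Barnes \cite{Bar}), and Barnes likewise passes to the Lie algebra $L/I$, applies the classical Levi theorem there, pulls back the Levi factor to $M=\varphi^{-1}(\bar S)$, and reduces everything to splitting the abelian extension $0\to I\to M\to\bar S\to 0$ in which only the right action survives (your derivation $[x,[y,y]]=0$ is correct, though note it says that multiplication by elements of $I$ \emph{on the right} vanishes, i.e.\ $[L,I]=0$; the surviving structure on $I$ is the right $\bar S$-action, as you then use). The genuine gap is at the step you yourself flag as the heart of the matter, and the route you propose there would fail as stated. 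The defect cochain $f(a,b)=[\sigma a,\sigma b]-\sigma[a,b]$ is \emph{not} alternating: $f(a,a)=[\sigma a,\sigma a]$ is a square and is generally nonzero precisely because $M$ is not Lie. Likewise the relevant coboundary is $(\delta\mu)(a,b)=\mu(a)\cdot b-\mu([a,b])$, also not alternating, so the Chevalley--Eilenberg complex is not a subcomplex of the Leibniz cochain complex for the antisymmetric module $I$, and the obstruction does not ``collapse'' into $H^2(\bar S,I)$. It lives in the Leibniz cohomology $HL^2(\bar S,I)$; concretely, the symmetric part $f(a,b)+f(b,a)$ of any Leibniz $2$-cocycle is an invariant element of $\Hom_{\bar S}(S^2\bar S,I)$, a space with no Chevalley--Eilenberg counterpart and which need not vanish (for $\bar S=sl_2$ and $I$ the $5$-dimensional irreducible module one has $S^2sl_2\cong V(4)\oplus V(0)$, so this space is one-dimensional). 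The vanishing you need is the second Whitehead lemma for \emph{Leibniz} cohomology with antisymmetric coefficients --- a true statement, but one requiring its own Leibniz-specific proof, not a citation of Weyl plus the classical Whitehead lemma.

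The irony is that the correct finish is far easier than any cohomology. Since $[M,I]=0$, the right multiplication operator $R_p(x)=[x,p]$ on $M$ depends only on $\bar p=\varphi(p)\in\bar S$, and the Leibniz identity gives $R_{[p,q]}=R_qR_p-R_pR_q$, so $x\mapsto[x,p]$ makes $M$ a finite-dimensional module over the semisimple Lie algebra $\bar S$ with $I$ a submodule. By Weyl's theorem (Theorem \ref{thm25} of the paper) choose a complementary submodule $S$, so $M=S\oplus I$. Being a submodule means $[S,M]\subseteq S$; in particular $[S,S]\subseteq S$, so $S$ is \emph{automatically} a subalgebra, mapped isomorphically onto $\bar S$ by $\varphi$, hence a semisimple Lie subalgebra, and your bookkeeping $L=M+R$, $M\cap R=I$, $S\cap I=0$ then yields $L=S\dot+R$ exactly as you wrote. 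So: keep your first two paragraphs verbatim, delete the cocycle discussion, and replace it with this three-line Weyl argument --- which is essentially what Barnes does.
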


An algebra $L$ is called \emph{simple} if it only ideals are
$\{0\}, I, L$ and $L^2\neq I$. From the proof of
Theorem \ref{t2}, it is not difficult to see that $S$ is a
semisimple Lie algebra. Therefore, we have that a simple Leibniz
algebra is a semidirect sum of simple Lie algebra $S$ and the
irreducible right module $I$ over $S$, i.e. $L=S\dot{+}I.$ Hence,
we get the description of the simple Leibniz algebras in terms of
simple Lie algebras and ideal $I$.

\begin{defn} \cite{Jac} A non-zero module $M$ over a Lie algebra whose only submodules are the
module itself and zero module is called {\it irreducible module}. A
non-zero module $M$ which admits decomposition into a direct sum of irreducible modules is
said to be {\it completely reducible}.
\end{defn}

Further, we shall use the following Weyl's semisimplicity theorem.

\begin{thm}\cite{Jac} \label{thm25} Let G be a semisimple Lie algebra over a field of characteristic zero.
Then every finite dimensional module over $G$ is completely
reducible.
\end{thm}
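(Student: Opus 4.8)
The plan is to prove the theorem by the classical Casimir-operator argument, which is valid over any field of characteristic zero. The starting observation is that complete reducibility of a finite-dimensional module $V$ is equivalent to the statement that \emph{every} submodule of $V$ admits a $G$-invariant complement. So I would reduce the whole theorem to the problem of constructing such complements.

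First I would record the standard reduction to codimension one. Given a submodule $W\subseteq V$, view $\Hom_F(V,W)$ as a $G$-module via $(x\cdot f)(v)=x\cdot f(v)-f(x\cdot v)$. Inside it let $\mathcal V$ be the subspace of maps whose restriction to $W$ is a scalar multiple of $\mathrm{id}_W$, and $\mathcal W$ the subspace of maps vanishing on $W$. Both are submodules, $\mathcal W\subseteq\mathcal V$, and $\mathcal V/\mathcal W$ is one-dimensional with trivial $G$-action. A $G$-invariant complement to $\mathcal W$ in $\mathcal V$ is then spanned by a $G$-fixed map $f\colon V\to W$ with $f|_W=\mathrm{id}_W$, i.e.\ a $G$-module projection; its kernel is the sought invariant complement to $W$ in $V$. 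Hence it suffices to split any sequence $0\to W\to V\to F\to 0$ in which $F$ is the trivial one-dimensional module.

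Next I would treat this codimension-one case. By induction on $\dim W$ (splitting off successive submodules of $W$) I may assume $W$ is irreducible. Since $G$ is perfect, as $G=[G,G]$ for semisimple $G$, and $V/W$ is abelian, $G$ acts trivially on $V/W$, so $G\cdot V\subseteq W$. If $W$ is the trivial module, then $G\cdot(G\cdot V)=0$, which forces the image of $G$ in $gl(V)$ to square to zero; being perfect it must vanish, and any vector-space complement is invariant. If $W$ is nontrivial, I introduce the Casimir operator: by Cartan's criterion the trace form $\beta(x,y)=\operatorname{tr}(x_V y_V)$ is nondegenerate on the image of $G$ in $gl(V)$ (itself semisimple), so choosing $\beta$-dual bases $\{u_i\},\{w_i\}$ I set $c=\sum_i (u_i)_V (w_i)_V\in\operatorname{End}(V)$. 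One verifies that $c$ commutes with the $G$-action and has trace $\dim G$. By Schur's lemma $c$ acts on the nontrivial irreducible $W$ as a nonzero scalar, whereas $G\cdot V\subseteq W$ forces $c$ to vanish on $V/W$; thus $\ker c$ is a $G$-submodule meeting $W$ trivially and of complementary dimension, giving the desired splitting.

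The main obstacle is the construction and analysis of the Casimir element: checking that it is genuinely a module endomorphism (independence of the dual bases and commutation with the action) and extracting its nonzero eigenvalue on the nontrivial irreducible through the trace computation together with Schur's lemma. This rests on Cartan's criterion guaranteeing nondegeneracy of the trace form for semisimple $G$, which is precisely the point at which semisimplicity is used; the inductive reduction to irreducible $W$ and the $\Hom$-space trick reducing to codimension one are then routine. Over the complex field an alternative would be Weyl's original unitarian trick via a compact real form, but the algebraic route above is the one I would carry out for a general field of characteristic zero.
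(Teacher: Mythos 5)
The paper offers no proof of this statement---it is quoted verbatim from Jacobson's book---and your Casimir-operator argument is precisely the classical proof found in that cited source (the reduction via the module structure on $\Hom_F(V,W)$ to the codimension-one case, induction to make $W$ irreducible, the perfectness argument when $W$ is trivial, and the Casimir element with nondegenerate trace form via Cartan's criterion when it is not), so your proposal is correct and matches the standard route in all essentials. One small caveat: over a non-algebraically-closed field of characteristic zero, Schur's lemma gives only that $c|_W$ is a module endomorphism rather than a nonzero scalar, but your own trace computation ($\operatorname{tr}(c)=\dim\rho(G)\neq 0$ with $c$ vanishing on $V/W$, where $\rho(G)$ is the image of $G$ in $gl(V)$) already shows $c|_W\neq 0$, hence $\ker(c|_W)=0$ by irreducibility and the splitting $V=W\oplus\ker c$ follows unchanged; in the paper's setting over $\mathbb{C}$ the scalar form of Schur's lemma applies directly, so this is a cosmetic repair, not a gap.
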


Now we present results on classification of Leibniz
algebras with the conditions $L/I\cong sl_2\oplus R, \ dimR=2, 3,$ where $I$ is an irreducible right module over $sl_2\ (dim I\neq3)$.

\begin{thm}\label{thm01} \cite{Lisa}
 Let $L$ be a Leibniz algebra whose quotient $L/I\cong sl_2\oplus R$, where $R$ is a two-dimensional solvable ideal and  $I$ is a right irreducible module over $sl_2\ (dim I\neq3)$. Then there exists a basis $\{e, h, f,
x_0, x_1, \dots, x_m, y_1, {y_2}\}$ of the algebra $L$ such that the table of multiplication in $L$ has the following form:
$$\left\{\begin{array}{lll}
[e,h]=-[h,e]=2e, & [h,f]=-[f,h]=2f, & [e,f]=-[f,e]=h,\\[1mm]
[y_1,{y_2}]=-[{y_2},y_1]=y_1,& [x_k,{y_2}]=a x_k, & 0\leq k\leq m, \  a\in \mathbb{C},\\[1mm]
[x_k,h]=(m-2k)x_k & 0\leq k\leq m,&\\[1mm]
[x_k,f]=x_{k+1},  & 0\leq k\leq m-1, & \\[1mm]
[x_k,e]=-k(m+1-k)x_{k-1}, & 1\leq k\leq m. &
\end{array}\right.$$
\end{thm}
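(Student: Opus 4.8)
The plan is to fix as much of the multiplication as possible from representation theory and then determine the few remaining products through the Leibniz identity, with the hypothesis $\dim I\neq 3$ entering at a single decisive point. By Levi's Theorem~\ref{t2} we may write $L=sl_2\dot{+}\,\mathrm{Rad}$; using the irreducibility of $I$ and the $sl_2$-representation recalled in Section~2, I would choose the canonical basis $\{e,h,f\}$ and a basis $\{x_0,\dots,x_m\}$ of $I$ realizing the three module-product families displayed in the statement, and then complete this to a basis of $L$ by picking $\{y_1,y_2\}$ spanning a complement of $I$ in $\mathrm{Rad}$. Since $L/I\cong sl_2\oplus R$ is a direct sum of Lie algebras, we already know $[sl_2,R]\subseteq I$, $[R,sl_2]\subseteq I$, and $[y_1,y_2],[y_2,y_1]\in \langle y_1,y_2\rangle + I$.

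First I would record the identity $[z,[x,x]]=[[z,x],x]-[[z,x],x]=0$, which propagates to $[L,I]=0$ and hence kills every product whose right argument lies in $I$; in particular $[y_i,x_k]=0$, $[x_j,x_k]=0$, and $[sl_2,I]=0$, so only products with right argument outside $I$ remain undetermined. Feeding $(x_k,z,y_i)$ with $z\in sl_2$ into the Leibniz identity and using $[z,y_i]\in I$ together with $[I,I]=0$ shows that the map $x\mapsto[x,y_i]$ commutes on $I$ with every map $x\mapsto[x,z]$, $z\in sl_2$; as $I$ is irreducible over $\mathbb{C}$, Schur's Lemma yields $[x_k,y_i]=\lambda_i x_k$ for scalars $\lambda_i$.

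The mixed $sl_2$--$R$ products are removed in two steps. Viewing $\mathrm{Rad}$ as a right $sl_2$-module (via $x\mapsto[x,z]$), we have the submodule $I$ with trivial quotient $R$; by Weyl's Theorem~\ref{thm25} the module is completely reducible, so I may replace $y_1,y_2$ by representatives of a complementary trivial submodule and thereby assume $[y_i,sl_2]=0$, the adjustment being by elements of $I$ and hence harmless to the products already computed. The harder half is $[sl_2,R]=0$, and this is exactly where $\dim I\neq3$ is needed: putting $u_i=[e,y_i]\in I$ and feeding $(e,y_i,w)$ into the Leibniz identity (using $[y_i,w]=0$) gives $[[e,y_i],w]=[[e,w],y_i]$ for every $w\in sl_2$, whence $[u_i,e]=0$ and $[u_i,h]=2u_i$; thus $u_i$ is a highest-weight vector of weight $2$ in $I$. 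Since the highest weight of $I$ equals $m=\dim I-1\neq2$, we must have $u_i=0$; the symmetric computation with $f$ (lowest weight $-m\neq-2$) gives $[f,y_i]=0$, and then $w=f$ in the displayed relation yields $[h,y_i]=[[e,f],y_i]=[[e,y_i],f]=0$.

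It remains to fix the structure of $R$ and the scalars. With $[sl_2,R]=[R,sl_2]=0$, the Leibniz identity gives $[[y_1,y_2],z]=0$ and $[[y_i,y_i],z]=0$ for all $z\in sl_2$; because a nontrivial irreducible $sl_2$-module carries no nonzero invariant vector, the $I$-components of $[y_1,y_2]$ and of the squares $[y_i,y_i]$ vanish, so $\langle y_1,y_2\rangle$ is a genuine two-dimensional Lie subalgebra. Normalizing the non-abelian case to $[y_1,y_2]=-[y_2,y_1]=y_1$ and then applying the Leibniz identity to $(x_k,y_1,y_2)$ gives $\lambda_1 x_k=\lambda_1\lambda_2 x_k-\lambda_1\lambda_2 x_k=0$, so $\lambda_1=0$; writing $a:=\lambda_2$ recovers precisely the table in the statement. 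I expect the weight computation forcing $[sl_2,R]=0$ to be the crux: it is the only place that genuinely uses $\dim I\neq3$, and arranging the Leibniz identities so that the eigenvalue of right multiplication by $h$ on $u_i$ is pinned to $2$ is the delicate part of the argument.
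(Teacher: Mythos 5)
You should first be aware that the paper contains no proof of Theorem \ref{thm01}: it is imported verbatim from \cite{Lisa}, so the comparison can only be with the standard argument of that reference, which your proposal essentially reproduces and which matches the toolkit the paper uses for its own results in Sections 3--4 (Leibniz-identity triples as in Lemma \ref{lem4}, Schur-type scalar arguments as in Theorem \ref{thm2}, Weyl's Theorem \ref{thm25}). The steps you give check out where they apply: $[L,I]=0$ follows since $I$ is spanned by squares (noted after the definition of $I$ in Section 2); the Schur argument from the triples $(x_k,z,y_i)$ correctly yields $[x_k,y_i]=\lambda_i x_k$; Weyl's theorem legitimately replaces $y_1,y_2$ by a trivial complementary submodule, harmlessly since the adjustment is by elements of $I$; and the crux is right: $u_i=[e,y_i]$ satisfies $[u_i,w]=[[e,w],y_i]$, hence $[u_i,e]=0$ and $[u_i,h]=2u_i$, so $u_i$ lies in the kernel of right multiplication by $e$ on $I$, which is $\langle x_0\rangle$ of weight $m$, and $(m-2)u_i=0$ forces $u_i=0$ exactly because $\dim I\neq 3$; the $f$- and $h$-computations and the final evaluation $\lambda_1=\lambda_1\lambda_2-\lambda_2\lambda_1=0$ are likewise correct.

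The genuine gap is hidden in your phrase ``normalizing the non-abelian case'': you never dispose of the possibility that $\langle y_1,y_2\rangle$ is abelian, yet the asserted table contains $[y_1,y_2]=y_1$, so a proof of the statement as written must rule that branch out --- and it cannot be ruled out. The split algebra $(sl_2\dot{+}I)\oplus\mathbb{C}^2$ (all products involving $y_1,y_2$ equal to zero, $\dim I=m+1$, $m\neq 2$) satisfies every hypothesis literally: $I$ is spanned by squares since $[I,sl_2]=I$ for $m\geq 1$, and $L/I\cong sl_2\oplus R$ with $R$ two-dimensional abelian, hence solvable; but it is isomorphic to no algebra of the stated family, since the table forces $y_1\in L^2$ while here $L^2=sl_2+I$. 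So either the theorem tacitly assumes $R$ non-abelian (which is how the source reference's case division should be read, and which your proof must state explicitly), or the abelian branch must be enumerated --- there $[y_i,y_j]=0$ and $[x_k,y_i]=\lambda_i x_k$ with $(\lambda_1,\lambda_2)$ normalizable to $(0,0)$ or $(0,1)$, giving algebras outside the stated table. A second, smaller hole: your final step removes the $I$-components of $[y_1,y_2]$ and $[y_i,y_i]$ by invoking ``no invariant vectors in a nontrivial irreducible module,'' which fails when $m=0$ (allowed by $\dim I\neq 3$), since then $I$ is the trivial one-dimensional module; that degenerate case needs separate treatment or explicit exclusion.
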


The following theorem extends Theorem \ref{thm01} for $dim R=3$.
\begin{thm} \label{thm02} \cite{Rakh1}
Let $L$ be a Leibniz algebra whose quotient
$L/I\cong sl_2\oplus R$, where $R$ is a three-dimensional solvable ideal and  $I$ is an irreducible right module over $sl_2\ (dim I\neq3)$. Then there exists a basis $\{e, h, f,
x_0, x_1, \dots, x_m, y_1, {y_2}, y_3\}$ of the algebra $L$ such that the table of multiplication in $L$ has one of the following two forms:
$$\begin{array}{c}
L_1(\alpha,a):\left\{\begin{array}{lll}[e,h]=-[h,e]=2e, & [h,f]=-[f,h]=2f, & [e,f]=-[f,e]=h,\\[1mm]
[y_1,y_2]=-[y_2,y_1]=y_1, & [y_3,y_2]=-[y_2,y_3]=\alpha y_3, & \\[1mm]
[x_k,h]=(m-2k)x_k, & 0 \leq k \leq m,&\\[1mm]
[x_k,f]=x_{k+1},  & 0 \leq k \leq m-1,& \\[1mm]
[x_k,e]=-k(m+1-k)x_{k-1}, & 1 \leq k \leq m,&\\[1mm]
[x_i,y_2]=a x_i, & 0 \leq i \leq m. \\[1mm]
 \end{array}\right.\\[20mm]
L_2(a): \left\{\begin{array}{lll}
[e,h]=-[h,e]=2e, & [h,f]=-[f,h]=2f, & [e,f]=-[f,e]=h,\\[1mm]
[y_1,y_2]=-[y_2,y_1]=y_1+ y_3, & [y_3,y_2]=-[y_2,y_3]=y_3,&\\[1mm]
[x_k,h]=(m-2k)x_k, & 0 \leq k \leq m,&\\[1mm]
[x_k,f]=x_{k+1},  & 0 \leq k \leq m-1, & \\[1mm]
[x_k,e]=-k(m+1-k)x_{k-1}, & 1 \leq k \leq m, &\\[1mm]
[x_i,y_2]=a x_i, & 0 \leq i \leq m.&
 \end{array}\right.
 \end{array}$$
\end{thm}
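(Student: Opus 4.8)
The plan is to follow the method that yields Theorem \ref{thm01} for $\dim R=2$, first extracting the rigid part of the multiplication forced by the module structure and then classifying the three-dimensional solvable piece. Since $I$ is an irreducible right $sl_2$-module, I would first fix a basis $\{x_0,\dots,x_m\}$ of $I$ (so $\dim I=m+1$) realizing the canonical action, which reproduces the products $[x_k,h],[x_k,f],[x_k,e]$ exactly as in the table quoted from \cite{Rakh}. Next I would record the general Leibniz fact that $[L,I]=0$: setting $z=y$ in the identity gives $[x,[y,y]]=0$, and an induction over the generators of $I$ extends this to $[L,i]=0$ for every $i\in I$. In particular $[s,x_k]=0$ for $s\in sl_2$ and $[y_j,x_k]=0$, so the only nonzero products in which a basis vector of $I$ occurs are those with $I$ on the left; this already disposes of all products absent from the target tables.

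To fix the transverse part, lift $R$ to a subspace $V=\langle y_1,y_2,y_3\rangle$ of $L$. Viewing $L$ as a finite-dimensional $sl_2$-module via right multiplication $R_s(x)=[x,s]$ (a module by the Leibniz identity, which gives $R_{[s,t]}=-[R_s,R_t]$), the subspace $sl_2+I$ is a submodule. By Weyl's theorem (Theorem \ref{thm25}) it admits a complementary submodule, and because $sl_2$ acts trivially on $L/(sl_2+I)\cong R$, that complement is a trivial submodule which projects isomorphically onto $R$. Choosing $V$ to be this complement yields $[y_j,s]=0$, i.e. $[R,sl_2]=0$. The opposite products $[s,y_j]$ are \emph{not} controlled by this right-module decomposition and constitute the main obstacle, handled below.

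For the action of $R$ on $I$, note that for $y\in V$ the operator $R_y|_I$ is an $sl_2$-endomorphism: since $[s,y]\in I$ and $[L,I]=0$, the Leibniz identity forces $[[x,s],y]=[[x,y],s]$, i.e. $R_yR_s=R_sR_y$ on $I$. As $I$ is irreducible over $\mathbb C$, Schur's lemma gives $R_y=\lambda(y)\,\mathrm{id}_I$ for a linear functional $\lambda\colon R\to\mathbb C$; and $R_{[y_i,y_j]}=-[R_{y_i},R_{y_j}]=0$ on $I$ shows $\lambda$ vanishes on $[R,R]$. This produces the scalar products $[x_k,y]=\lambda(y)x_k$ and explains why only the direction of $R$ transverse to $[R,R]$ acts nontrivially, which is the origin of the single parameter $a=\lambda(y_2)$.

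The heart of the argument is to prove $[sl_2,V]=0$ under the hypothesis $\dim I\neq3$. Fixing $y\in V$ and writing $T(s)=[s,y]\in I$, the Leibniz identity gives $T([a,b])=R_bT(a)$ together with $R_aT(b)+R_bT(a)=0$; feeding in $[e,f]=h$, $[e,h]=2e$, $[h,f]=2f$ shows $T(e),T(h),T(f)$ must be $R_h$-eigenvectors of weights $2,0,-2$, and comparing the relations $T([h,f])=R_fT(h)$ and $T([e,h])=R_hT(e)$ yields $(m-2)(m+4)\,T(e)=0$ and $(m-2)(m+4)\,T(f)=0$. Since $m\neq2$ (this is precisely $\dim I\neq3$, the one value where the weights $2,0,-2$ all occur simultaneously), we get $T\equiv0$, i.e. $[sl_2,V]=0$. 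With all cross products gone and the scalar action in hand, $L$ is determined by the three-dimensional solvable Lie structure carried by $V$ and the possible $I$-valued corrections $c_{ij}$ to the brackets $[y_i,y_j]$. I would show these corrections form a Leibniz $2$-cocycle removable by substitutions $y_j\mapsto y_j+I$, and then classify $R$ via the Jordan form of $\mathrm{ad}_{y_2}$ on its two-dimensional abelian ideal: the diagonalizable case, with eigenvalues normalized to $1,\alpha$, produces $L_1(\alpha,a)$, and the single Jordan block, with eigenvalue normalized to $1$, produces $L_2(a)$. Finally the remaining (nilpotent) structures of $R$ must be checked separately and shown to give only decomposable algebras or an effectively lower-dimensional radical. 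The two genuinely delicate points are the weight computation forcing $[sl_2,V]=0$ (where $\dim I\neq3$ is indispensable) and the bookkeeping that clears the $c_{ij}$ and exhausts the Jordan types.
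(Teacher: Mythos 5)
First, note that the paper contains no proof of Theorem \ref{thm02}: it is imported verbatim from \cite{Rakh1}, so your attempt can only be measured against the technique of the cited literature. Measured that way, your structural core is correct and is exactly the standard mechanism (it also underlies Lemma \ref{lem4} and Proposition \ref{prop1} of this paper): $[L,I]=0$ from polarizing squares; the right multiplications $R_s$ satisfy $R_{[s,t]}=R_tR_s-R_sR_t$, so $L$ is a right $sl_2$-module, $sl_2+I$ is a submodule, and a Weyl complement (Theorem \ref{thm25}) gives a lift of $R$ with $[R,sl_2]=0$; Schur's lemma on the irreducible $I$ gives $[x,y]=\lambda(y)x$ with $\lambda([R,R])=0$; and your weight computation for $T(s)=[s,y]$ is arithmetically right --- $T(e),T(h),T(f)$ must sit in weights $2,0,-2$, the compatibility relations force $(m-2)(m+4)T=0$, and $m\neq 2$ is precisely $\dim I\neq 3$. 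One simplification you missed: for $m\geq 1$ no cocycle argument is needed to clear the corrections $c_{ij}\in I$ to $[y_i,y_j]$, since applying the Leibniz identity to $(y_i,y_j,s)$ yields $[[y_i,y_j],s]=0$, so $c_{ij}$ is an $sl_2$-invariant vector in a nontrivial irreducible module, hence zero; only $m=0$ requires bookkeeping.

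The genuine gap is your endgame for nilpotent $R$, and it cannot be closed as you propose. Take $R$ abelian with $[x_k,y_2]=ax_k$ and all other radical products zero, or take $R$ Heisenberg, $[y_1,y_2]=-[y_2,y_1]=y_3$ with $[x_k,y_2]=ax_k$, $a\neq 0$ (consistent with your constraint, since $y_3\in[R,R]$ while $y_2\notin[R,R]$). In both cases the Leibniz identity checks directly, the span of squares equals $I$ (for $m\geq1$ one has $[I,sl_2]=I$), $L/I\cong sl_2\oplus R$, and $I$ is an irreducible right $sl_2$-module of dimension $\neq 3$ --- so every hypothesis of the quoted statement holds. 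Yet since $I$ is characteristic, $L/I$ is an isomorphism invariant, and the radical image of $L_1(\alpha,a)$ and $L_2(a)$ is never nilpotent ($[y_1,y_2]=y_1$, resp.\ $y_1+y_3$), so neither example is isomorphic to anything in the list. Moreover the Heisenberg example is not even decomposable, because $y_3=[y_1,y_2]$ lies in $[L,L]$, so your fallback that nilpotent $R$ gives ``only decomposable algebras or an effectively lower-dimensional radical'' fails for it. The conclusion is that the statement as literally quoted admits counterexamples with nilpotent $R$; it can only be proved after importing the implicit restriction of \cite{Rakh1} (in effect, non-nilpotent $R$, as in the parallel $\dim R=2$ statement of \cite{Lisa}), and any blind proof must confront and exclude --- or list --- these nilpotent cases rather than dismiss them.
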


For a semisimple Lie algebra $S$ we consider a semisimple Leibniz
algebra $L$ such that $L=(sl_2\oplus S) \dot{+}I.$ We put $I_1 =
[I,sl_2].$

Let $I_1$ be a reducible over $sl_2.$ Then by Theorem \ref{thm25}
we have a decomposition:
$$I_1 = I_{1,1}\oplus I_{1,2}\oplus \dots \oplus I_{1,p},$$ where
$I_{1,j}$ are irreducible modules over $sl_2$ for every $j$, $1 \leq j \leq p.$

\begin{thm} \label{thm28} \cite{Sam}
Let $dim I_{1,j_1} = dim I_{1,j_2} = \cdots=dim I_{1,j_s} =t+1,$ $1\leq s\leq p.$ Then there exist $t+1$ submodules $I_{2,1},
I_{2,2}, \dots I_{2,t+1}$ of dimension $s$ of the module $I_2=[I,
S]$ such that
$$I_{2,1}+ I_{2,2}+ \dots +I_{2,t+1} = I_1 \cap I_2.$$
\end{thm}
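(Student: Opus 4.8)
The plan is to exploit the fact that the right actions of $sl_2$ and of $S$ on $I$ commute, and then to transport the $S$-module structure across the $h$-weight spaces of the $(t+1)$-dimensional $sl_2$-components by means of the lowering operator $[\,\cdot\,,f]$. First I would record the commutation of the two actions. Since $sl_2$ and $S$ sit inside $L$ as a direct sum inside the semisimple part, $[sl_2,S]=0$; hence for $x\in I$, $a\in sl_2$ and $b\in S$ the Leibniz identity gives $[[x,a],b]=[x,[a,b]]+[[x,b],a]=[[x,b],a]$. Thus $S$ acts on $I$ by $sl_2$-module endomorphisms: it preserves the $h$-weight grading and commutes with the operators $[\,\cdot\,,e]$ and $[\,\cdot\,,f]$. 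In particular $S$ leaves $I_1=[I,sl_2]$ invariant and, by Weyl's theorem (Theorem \ref{thm25}) applied to $sl_2\oplus S$, it preserves each $sl_2$-isotypic component of $I_1$.

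Next I would set up coordinates on the isotypic component assembled from the $s$ modules of dimension $t+1$. Put $W=I_{1,j_1}\oplus\dots\oplus I_{1,j_s}$ and fix a canonical basis $\{x_0^{(i)},x_1^{(i)},\dots,x_t^{(i)}\}$ of each $I_{1,j_i}$ as in the $sl_2$-module table above, so that $[x_k^{(i)},h]=(t-2k)x_k^{(i)}$, $[x_k^{(i)},f]=x_{k+1}^{(i)}$ and $[x_k^{(i)},e]=-k(t+1-k)x_{k-1}^{(i)}$. For fixed $k$ let $W_{(k)}=\langle x_k^{(1)},\dots,x_k^{(s)}\rangle$ be the weight-$(t-2k)$ space, which is $s$-dimensional. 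The key computation is that the matrix of each $b\in S$ on $W_{(k)}$ is independent of $k$: from $[[x_k^{(i)},b],h]=[[x_k^{(i)},h],b]=(t-2k)[x_k^{(i)},b]$ we get $[x_k^{(i)},b]\in W_{(k)}$, say $[x_k^{(i)},b]=\sum_\ell c_{i\ell}(b)\,x_k^{(\ell)}$; applying $[\,\cdot\,,f]$ and using $[[x_k^{(i)},b],f]=[[x_k^{(i)},f],b]=[x_{k+1}^{(i)},b]$ reproduces the same coefficients $c_{i\ell}(b)$ at level $k+1$. Hence all the $W_{(k)}$ are isomorphic as $S$-modules.

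Then I would define the required submodules by $I_{2,k+1}:=[W_{(k)},S]=W_{(k)}\cap I_2$ for $0\le k\le t$. These are $S$-submodules of $I_2$; by the previous paragraph they all have the same dimension (equal to the dimension of the image of the $S$-action on the common $s$-dimensional module, which under the present hypotheses is $s$), and the operators $[\,\cdot\,,f]$ and $[\,\cdot\,,e]$ carry $I_{2,k+1}$ onto $I_{2,k+2}$ and $I_{2,k}$ respectively. Summing over $k$ gives $\sum_{k=0}^{t}I_{2,k+1}=[W,S]$, which is contained in $I_1\cap I_2$, since $[W,S]\subseteq W\subseteq I_1$ and $[W,S]\subseteq[I,S]=I_2$.

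The main obstacle is the final identification $[W,S]=I_1\cap I_2$ together with the exact dimension count. For the reverse inclusion I would invoke complete reducibility of $I$ as an $sl_2\oplus S$-module to write $I_1\cap I_2=[I_1,S]$, and then verify that in the situation at hand the $(t+1)$-dimensional isotypic component $W$ accounts for all of $[I_1,S]$. The delicate points are to guarantee that no $S$-fixed vectors are hidden in the multiplicity space (which would lower the dimension of each $I_{2,k+1}$ below $s$) and that $sl_2$-components of other dimensions do not contribute to $I_1\cap I_2$. This bookkeeping, rather than the weight-space transport of the $S$-action, is where the real care is required.
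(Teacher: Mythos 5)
First, a caveat on the comparison: this paper does not actually prove Theorem \ref{thm28} --- it is imported verbatim from \cite{Sam}. What the paper does contain is the key computational mechanism, namely Proposition \ref{prop1} and the identities quoted from the proof of Theorem 3.7 of \cite{Sam}, and your weight-transport step coincides with it exactly: since $[sl_2,S]=0$, the Leibniz identity makes right multiplication by $b\in S$ an $sl_2$-module endomorphism of $I$, so it preserves the isotypic component $W$ and its weight spaces $W_{(k)}$, and conjugating with $[\,\cdot\,,f]$ shows that the matrix of $b$ on $W_{(k)}$ is independent of $k$. This part of your argument is correct and yields $t+1$ pairwise isomorphic $S$-submodules $I_{2,k+1}=[W_{(k)},S]=W_{(k)}\cap I_2$ with $I_{2,1}+\dots+I_{2,t+1}\subseteq I_1\cap I_2$. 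Your reduction $I_1\cap I_2=[I_1,S]$ is also sound: writing $I=I_1\oplus I_0$ with $I_0$ the $sl_2$-trivial part, $S$ preserves both summands, hence $I_2=[I_1,S]\oplus[I_0,S]$ and intersecting with $I_1$ kills the second summand.

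However, there is a genuine gap, and it sits precisely at the two points you yourself defer as ``bookkeeping.'' (i) You never prove $\dim I_{2,k+1}=s$. This amounts to showing that the $s$-dimensional multiplicity space $W_{(k)}$ contains no trivial $S$-submodule, and nothing in the commutation relations or in Weyl's theorem (Theorem \ref{thm25}) rules that out: an $S$-fixed vector in $W_{(k)}$ would give $\dim[W_{(k)},S]<s$, and in the extreme case where $S$ acts trivially on $I$ all your modules vanish while the statement still demands dimension $s$. (ii) You never prove the reverse inclusion $I_1\cap I_2\subseteq\sum_k I_{2,k+1}$, which requires $[I_{1,j},S]=0$ for every irreducible component outside the chosen family (components of other dimensions, and any further $(t+1)$-dimensional ones when $s<p$); these would otherwise contribute their own blocks to $[I_1,S]$. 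Neither claim follows from the hypotheses as you have used them: both depend on the standing assumptions of \cite{Sam} on the semisimple Leibniz algebra $L=(sl_2\oplus S)\dot{+}I$, which must be invoked explicitly to exclude hidden $S$-trivial vectors and extraneous isotypic contributions. As written, your argument establishes only the easy half --- the construction, the mutual isomorphism of the $I_{2,k}$, the bound $\dim I_{2,k}\leq s$, and one inclusion --- so the theorem itself remains unproved.
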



\section{On Leibniz algebras whose associated Lie algebra is isomorphic to
$sl_2^1\oplus sl_2^2\oplus \dots \oplus sl_2^s\oplus R.$}

In this section, we will consider a Leibniz algebra satisfying the following conditions:
\begin{enumerate}
\item[$(i)$] the quotient algebra $L/I$ is isomorphic to the direct sum $sl_2^1\oplus sl_2^2\oplus \dots \oplus sl_2^s\oplus R,$
where $R$ is $n$-dimensional solvable Lie algebra;

\item[$(ii)$] the ideal $I$ is a right irreducible $sl_2^k$-module for some $k\in\{1, \dots, s\}.$
\end{enumerate}

By reordering direct sums and changing indexes in the condition
$(ii)$ we can assume $k=1$.

We denote by $<e_i,f_i,h_i>, \ 1\leq i\leq s$ the basis elements
of $L$ which are preimage of standard basis of $sl_2^i$ in the
homomorphism $L \rightarrow L/I$ and we put
$$I=<x_0, \dots, x_m>, \quad R=<y_1, \dots, y_n>.$$

 Then due to \cite{Jac} we have
$$\begin{array}{lll}
\, [e_1,h_1]=-[h_1,e_1]=2e_1, & [h_1,f_1]=-[f_1,h_1]=2f_1, &[e_1,f_1]=-[f_1,e_1]=h_1, \\
\, [x_i,h_1]=(m-2i)x_i, & 0 \leq i \leq m ,&\\
\, [x_i,f_1]=x_{i+1},  & 0 \leq i \leq m-1, & \\
\, [x_i,e_1]=-i(m+1-i)x_{i-1}, & 1 \leq i \leq m. &\\
 \end{array}$$

The correctness of the next lemma follows from Lemma 3.3 in \cite {Sam}.
\begin{lem} \label{lem1} Let $L$ be a Leibniz algebra satisfying the conditions (i)-(ii). Then $[I,sl_2^j]=0$ for any $j\in\{2, \dots, s\}.$
\end{lem}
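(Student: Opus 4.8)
The plan is to regard $I$ as a right module over the associated Lie algebra and to exploit its irreducibility over $sl_2^1$ together with Schur's lemma. Since $I$ is an abelian ideal, for $v,w\in I$ we have $[w,v]\in[I,I]=0$, so right multiplication by elements of $I$ annihilates $I$; consequently the right action $R_a(v)=[v,a]$ descends to a representation of the quotient Lie algebra $L/I\cong sl_2^1\oplus\dots\oplus sl_2^s\oplus R$ on $I$. In particular, for each $j\in\{2,\dots,s\}$ and each $b\in sl_2^j$ we obtain a linear operator $R_b\colon I\to I$, and the whole of $I$ carries the structure of a module over $S=sl_2^1\oplus\dots\oplus sl_2^s$.

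First I would show that the operators coming from $sl_2^j$ commute with those coming from $sl_2^1$. Fix $a\in sl_2^1$ and $b\in sl_2^j$ with $j\geq2$. Since $sl_2^1$ and $sl_2^j$ sit in different direct summands of $L/I$, their images commute in $L/I$, whence $[a,b]\in I$. As $I$ is abelian, $[v,[a,b]]\in[I,I]=0$ for every $v\in I$, and the Leibniz identity $[v,[a,b]]=[[v,a],b]-[[v,b],a]$ then gives $R_bR_a(v)=R_aR_b(v)$. Thus $R_b$ commutes with the $sl_2^1$-action, i.e. $R_b\in\Hom_{sl_2^1}(I,I)$.

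Now condition $(ii)$ says that $I$ is an irreducible $sl_2^1$-module, and since the ground field is $\mathbb{C}$, Schur's lemma forces every such endomorphism to be scalar: $R_b=\lambda(b)\,\mathrm{id}_I$ for some $\lambda(b)\in\mathbb{C}$. The assignment $b\mapsto R_b$ is (up to sign) a Lie-algebra homomorphism, so $R_{[b,c]}=\pm[R_b,R_c]$; because scalar operators commute, the right-hand side vanishes and $\lambda([b,c])=0$ for all $b,c\in sl_2^j$. Since $sl_2^j$ is simple it is perfect, $sl_2^j=[sl_2^j,sl_2^j]$, so $\lambda\equiv0$ and hence $R_b=0$ for every $b\in sl_2^j$. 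This is exactly the assertion $[I,sl_2^j]=0$.

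I expect the main obstacle to be the careful bookkeeping in the second step: one must make sure that $[a,b]$ genuinely lands in $I$ (which is where the direct-sum hypothesis on $L/I$ enters) and that the abelian-ness of $I$ kills the term $[v,[a,b]]$, so that the Leibniz identity yields the commutation $R_aR_b=R_bR_a$ cleanly. Once this commutation is in place, Schur's lemma and the perfectness of $sl_2^j$ finish the argument with no further computation.
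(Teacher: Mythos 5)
Your argument is correct, but note that the paper does not actually prove Lemma \ref{lem1} in-text: it simply remarks that the statement follows from Lemma 3.3 of \cite{Sam}, where the analogous fact is established by explicit computations in the canonical basis $\{x_0,\dots,x_m\}$ of the irreducible $sl_2^1$-module --- applying the Leibniz identity to triples involving $e_1,f_1,h_1$ to show first that each element of the complementary summand acts diagonally on $I$ with all eigenvalues equal, and then that this common scalar vanishes. Your proof reaches the same conclusion by a basis-free route: abelianness of $I$ (in fact $[L,I]=0$, since $I$ is spanned by squares) makes the right multiplications $R_a$ descend to a representation of $L/I$ on $I$; the direct-sum hypothesis gives $[a,b]\in I$ for $a\in sl_2^1$, $b\in sl_2^j$, so the Leibniz identity yields $[R_a,R_b]=0$; Schur's lemma over $\mathbb{C}$ then makes each $R_b$ scalar, and perfectness of $sl_2^j$ kills the scalar. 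In effect Schur's lemma replaces the hand computation of \cite{Sam}, and this buys generality: the same argument verbatim shows $[I,S']=0$ for any perfect subalgebra $S'$ commuting with $sl_2^1$ modulo $I$ (in particular it handles $sl_2^2\oplus\dots\oplus sl_2^s$ all at once), while for non-perfect pieces it still shows that every element of the radical acts on $I$ by a scalar --- exactly the products $[x_k,y_2]=ax_k$ appearing in Theorems \ref{thm01} and \ref{thm02}, a good consistency check. Two small points you should make explicit: in the perfectness step you need $sl_2^j$ to be an honest subalgebra of $L$, so that $[b,c]$ again lies in $sl_2^j$; this is supplied by Levi's theorem (Theorem \ref{t2}, cf.\ Lemma \ref{lem2}). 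Alternatively, since $R_u|_I=0$ for $u\in I$, you can run the entire argument at the level of the induced $L/I$-action, where no choice of preimages is needed and the summands commute by definition.
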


The following lemma trivially follows from Theorem \ref{t2}.
\begin{lem} \label{lem2} Let $L$ be a Leibniz algebra satisfying the conditions (i)-(ii). Then $[sl_2^t,sl_2^t]=sl_2^t$ for $ 1\leq t\leq s$ and $[sl_2^i,sl_2^j]=0$ for $1\leq i, j\leq s, \ i\neq j.$
\end{lem}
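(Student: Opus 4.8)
The plan is to read off both identities directly from Levi's Theorem (Theorem \ref{t2}), which is precisely why the lemma is only a triviality. First I would apply Theorem \ref{t2} to write $L=S\dot{+}R'$, where $R'$ is the solvable radical of $L$ and $S$ is a semisimple subalgebra; as noted in the discussion following that theorem, $S$ is in fact a semisimple Lie algebra. Composing this decomposition with the natural epimorphism $\varphi\colon L\to L/I$ and using that $I$ is contained in the solvable radical, the restriction $\varphi|_S$ carries $S$ isomorphically onto the semisimple part $sl_2^1\oplus sl_2^2\oplus\dots\oplus sl_2^s$ of $L/I$. Hence $S\cong sl_2^1\oplus\dots\oplus sl_2^s$ as Lie algebras.

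Next I would fix the preimages with care. Since $\varphi|_S$ is an isomorphism onto the semisimple part, I may and do choose the basis elements $e_i,f_i,h_i$ to lie in $S$, so that each subalgebra $sl_2^i=\langle e_i,f_i,h_i\rangle$ is genuinely a Lie subalgebra of $S$ rather than merely a preimage determined up to $I$. The essential gain is that all products among these elements are then computed inside the Lie algebra $S$, where they are honest Lie brackets and not Leibniz products considered modulo $I$.

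The two assertions now follow from the standard structure theory of semisimple Lie algebras. Each $sl_2^t$ is a simple ideal of $S$, hence perfect, which yields $[sl_2^t,sl_2^t]=sl_2^t$; and distinct simple ideals of a semisimple Lie algebra have zero mutual bracket, which gives $[sl_2^i,sl_2^j]=0$ for $i\neq j$. Both equalities hold in $S$ and therefore in $L$.

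The only genuine subtlety, and the step I would be most careful about, is the placement of the preimages in the second paragraph: because the preimages of the basis of each $sl_2^i$ are a priori fixed only modulo $I$, a naive computation in $L$ would deliver $[sl_2^i,sl_2^j]\subseteq I$ instead of $[sl_2^i,sl_2^j]=0$. Invoking Levi's Theorem to gather all the elements $e_i,f_i,h_i$ into a single Lie subalgebra $S$ is exactly what upgrades this containment to equality with zero.
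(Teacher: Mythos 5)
Your proof is correct and takes essentially the same approach as the paper: the paper gives no written argument beyond remarking that the lemma ``trivially follows'' from Levi's Theorem (Theorem \ref{t2}), and your write-up is a faithful elaboration of exactly that route --- lifting the semisimple part to a genuine semisimple Lie subalgebra $S$ of $L$ and reading off both identities from the simple-ideal decomposition of $S$. Your closing observation, that the preimages $e_i,f_i,h_i$ must be chosen inside $S$ so that $[sl_2^i,sl_2^j]$ vanishes outright rather than merely lying in $I$, is precisely the point the paper leaves implicit.
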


The next lemma establish that the solvable ideal $R$ is contained in two sided annihilator of each $sl_2^i, \ 2\leq i\leq s.$

\begin{lem} \label{lem4} Let $L$ be a Leibniz algebra satisfying the conditions (i)-(ii). Then $$[R,sl_2^i]=[sl_2^i,R]=0, \quad 2\leq i\leq s.$$
\end{lem}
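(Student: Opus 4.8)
The plan is to show that $sl_2^i$ (for $2\le i\le s$) lies in the two-sided annihilator of the solvable radical $R$, by exploiting the fact that these copies of $sl_2$ act trivially on $I$ (Lemma \ref{lem1}) together with Levi-compatibility of brackets (Lemma \ref{lem2}). The key structural observation is that, modulo $I$, the algebra $L$ is the Lie algebra $sl_2^1\oplus\dots\oplus sl_2^s\oplus R$, so in $L/I$ every generator of $sl_2^i$ commutes with $R$. Hence the products $[R,sl_2^i]$ and $[sl_2^i,R]$ land inside $I$. The task is then to upgrade ``lands in $I$'' to ``equals zero,'' and the mechanism for that is the triviality of the $sl_2^i$-action on $I$ from Lemma \ref{lem1}.

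\emph{First} I would write, for $2\le i\le s$ and an arbitrary $y\in R$, the three brackets $[y,e_i]$, $[y,h_i]$, $[y,f_i]$ as elements of $I$ (this is forced by the Lie structure of $L/I$), say $[y,h_i]=z$ with $z\in I=\langle x_0,\dots,x_m\rangle$. \emph{Then} I would apply the Leibniz identity to a well-chosen triple to convert a bracket of the form $[y,[e_i,h_i]]$ into terms involving $[\,\cdot\,,h_i]$ and $[\,\cdot\,,e_i]$ acting on elements already known to lie in $I$; since $[I,sl_2^i]=0$ by Lemma \ref{lem1}, those terms vanish, and the relation $[e_i,h_i]=2e_i$ from Lemma \ref{lem2} (which gives $[e_i,h_i]=2e_i$ modulo the annihilation of cross-terms) forces $[y,e_i]$ itself to be annihilated. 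Concretely, expanding $[y,[e_i,h_i]]=[[y,e_i],h_i]-[[y,e_i],h_i]$-type identities and using that each inner bracket already sits in $I$ (so its further bracket with $h_i,e_i,f_i$ is zero by Lemma \ref{lem1}) collapses the left-hand side, yielding $2[y,e_i]=0$, hence $[y,e_i]=0$; the same scheme with the role of $e_i,f_i,h_i$ permuted disposes of $[y,f_i]$ and $[y,h_i]$.

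\emph{For the other side}, $[sl_2^i,R]=0$, I would argue symmetrically: the images of $[e_i,y],[h_i,y],[f_i,y]$ in $L/I$ vanish, so these brackets lie in $I$; then feeding triples such as $[[e_i,h_i],y]$ through the Leibniz identity and invoking Lemma \ref{lem1} on the inner products with $sl_2^i$ reduces everything to a multiple of the bracket I am trying to kill, which must therefore be zero. Here I expect to use that $I$ is an $sl_2^1$-module but is \emph{annihilated} by $sl_2^i$ for $i\ge 2$, so any bracket of an element of $I$ with $e_i,h_i,f_i$ is zero regardless of the order.

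\emph{The main obstacle} I anticipate is bookkeeping in the Leibniz expansions: because the bracket is not antisymmetric, $[sl_2^i,R]$ and $[R,sl_2^i]$ must be treated as genuinely independent, and one must be careful to route every Leibniz application so that at least one factor in each residual term is an element of $I$ paired with a generator of $sl_2^i$ (so Lemma \ref{lem1} applies) rather than with $sl_2^1$ (where the action is nontrivial). Choosing the triples so that the surviving term is always a nonzero scalar multiple of the target bracket — using the relations $[e_i,h_i]=2e_i$, $[h_i,f_i]=2f_i$, $[e_i,f_i]=h_i$ of Lemma \ref{lem2} — is the delicate part; once the right triples are fixed, each identity reduces to ``(nonzero scalar)$\times$(target) $=0$,'' and the conclusion is immediate.
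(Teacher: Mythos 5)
Your proposal is correct, but it takes a genuinely different route from the paper's. The paper applies the Leibniz identity to the \emph{mixed} triples $(y,e_1,a)$, $(y,f_1,a)$, $(a,y,e_1)$, $(a,y,f_1)$ with $a\in sl_2^i$, $i\geq 2$: since $[e_1,a]=[a,e_1]=0$ (Lemma \ref{lem2}) and the inner brackets such as $[y,e_1]$ lie in $I$ and are killed on the right by $a$ (Lemma \ref{lem1}), these identities yield $[[y,a],e_1]=[[y,a],f_1]=0$ and $[[a,y],e_1]=[[a,y],f_1]=0$; the paper then concludes $[y,a]=[a,y]=0$ from the explicit irreducible $sl_2^1$-module structure of $I$, since an element of $I=\langle x_0,\dots,x_m\rangle$ annihilated by both $e_1$ and $f_1$ must vanish. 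You instead stay entirely inside $sl_2^i$: from $[y,[e_i,h_i]]=2[y,e_i]$ and $[y,[e_i,h_i]]=[[y,e_i],h_i]-[[y,h_i],e_i]$ (note the typo in your display, which repeats the first term), with $[y,e_i],[y,h_i]\in I$ annihilated by $sl_2^i$ via Lemma \ref{lem1}, everything collapses to $2[y,e_i]=0$, and permuting $e_i,f_i,h_i$ disposes of the remaining brackets. Your route buys something real: it never invokes the irreducibility of $I$ over $sl_2^1$, so it works whenever Lemma \ref{lem1} holds and in particular is not sensitive to the degenerate case $\dim I=1$ (the trivial module), where the paper's step ``annihilated by $e_1$ and $f_1$ implies zero'' would fail; the paper's mixed-triple argument, on the other hand, needs fewer identities per bracket at the cost of leaning on the module structure.

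One justification you should make explicit: for the left-hand products you need $[e_i,z]=0$ for $z\in I$, and this does \emph{not} follow from Lemma \ref{lem1}, which is one-sided ($[I,sl_2^i]=0$ concerns only the right action). It is nevertheless true, because the Leibniz identity gives $[x,[y,y]]=[[x,y],y]-[[x,y],y]=0$ and $I$ is spanned by squares, whence $[L,I]=0$. The paper's own proof uses this fact silently as well (in the triples $(a,y,e_1)$ and $(a,y,f_1)$, to kill $[a,[y,e_1]]$), so your argument is on the same footing once you state it; just do not attribute the left annihilation to Lemma \ref{lem1}.
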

\begin{proof} Applying Leibniz identity for the following triples
$$(y_s, e_1, a), \ (y_s, f_1, a), \ (a, y_s, e_1), \ (a, y_s, f_1)$$
lead $[y_s,a]=0, \quad [a,y_s]=0, \ 1\leq s\leq n$
for an arbitrary element $a\in sl_2^i, \ 2\leq i\leq s.$
\end{proof}

Summarizing the results of Lemmas \ref{lem1}-\ref{lem4}, we obtain the following theorem.

\begin{thm} \label{thm1} Let $L$ be a finite-dimensional Lebniz algebra satisfying conditions:
\begin{itemize}
\item[$(i)$] $L/I\cong sl_2^1\oplus sl_2^2\oplus \dots \oplus sl_2^s\oplus R,$ where $R$ is an $n$-dimensional solvable Lie algebra;\\

\item[$(ii)$] the ideal $I$ is a right irreducible module over $sl_2^1.$
\end{itemize}

Then, $L\cong((sl_2^1\oplus R)\dotplus I)\oplus sl_2^2\oplus \dots
\oplus sl_2^s.$
\end{thm}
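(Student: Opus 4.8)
The plan is to assemble Theorem \ref{thm1} directly from the three preceding lemmas, treating it as a structural decomposition rather than a fresh computation. The goal is to show that the copies $sl_2^2, \dots, sl_2^s$ split off as a direct summand, leaving $(sl_2^1\oplus R)\dotplus I$ behind. First I would fix notation by writing $L = sl_2^1 \oplus sl_2^2 \oplus \dots \oplus sl_2^s \oplus R \oplus I$ as a vector space direct sum via Levi's Theorem (Theorem \ref{t2}), where the semisimple part is $S = sl_2^1\oplus \dots \oplus sl_2^s$ and the solvable radical is $R\dotplus I$. The substance of the argument is to verify that the subspace $N := sl_2^2 \oplus \dots \oplus sl_2^s$ is a two-sided ideal that annihilates, and is annihilated by, the complementary subalgebra $M := (sl_2^1\oplus R)\dotplus I$, and that $M$ is itself a subalgebra; once both hold, the decomposition $L\cong M\oplus N$ as a direct sum of algebras follows immediately.

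Next I would check each of the required vanishing relations by invoking the lemmas. By Lemma \ref{lem2}, $[sl_2^i,sl_2^j]=0$ whenever $i\neq j$, so in particular every $sl_2^i$ with $i\geq 2$ commutes (in both orders) with $sl_2^1$ and with every other $sl_2^j$; moreover $[sl_2^t,sl_2^t]=sl_2^t$ shows each $N$-summand is a genuine $sl_2$-subalgebra, so $N$ is a semisimple Lie subalgebra and in particular closed under the bracket. By Lemma \ref{lem4}, $[R,sl_2^i]=[sl_2^i,R]=0$ for $2\leq i\leq s$, which disposes of all products between $N$ and the radical summand $R$. By Lemma \ref{lem1}, $[I,sl_2^j]=0$ for $j\geq 2$; the reverse products $[sl_2^j,I]$ vanish as well since $sl_2^j\subseteq S$ acts from the left on the abelian ideal $I$ and Lemma \ref{lem1} together with the Leibniz identity forces these to be zero (one reads off $[a,x]=0$ for $a\in sl_2^j,\ x\in I$ from the triples used in Lemma \ref{lem4}, or simply notes $I$ is a right $sl_2^1$-module on which $N$ acts trivially). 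Collecting these, $[N,M]=[M,N]=0$.

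It then remains to confirm that $M=(sl_2^1\oplus R)\dotplus I$ is closed under the bracket, so that it is honestly a subalgebra: the products internal to $sl_2^1$, the action of $sl_2^1$ on $I$, and the bracket relations within $R\dotplus I$ all stay inside $M$ because none of them involve the summands $sl_2^j$ for $j\geq 2$. Since $M$ and $N$ are mutually annihilating subalgebras whose direct sum as vector spaces is all of $L$, we obtain the algebra isomorphism $L\cong M\oplus N = \bigl((sl_2^1\oplus R)\dotplus I\bigr)\oplus sl_2^2\oplus \dots \oplus sl_2^s$, as claimed.

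I do not expect a genuine obstacle here, since all the hard work is carried by Lemmas \ref{lem1}--\ref{lem4}; the only point requiring minor care is the bookkeeping of \emph{both} left and right products between $N$ and $M$, since in the Leibniz (non-antisymmetric) setting the vanishing of $[x,y]$ does not automatically give the vanishing of $[y,x]$. The lemmas as stated already provide both directions for the $R$–$sl_2^i$ and $sl_2^i$–$sl_2^j$ interactions, and the $I$–$sl_2^j$ interaction in both orders follows from Lemma \ref{lem1} combined with the Leibniz identity, so the verification is routine.
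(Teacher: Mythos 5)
Your overall route is exactly the paper's: the paper's entire proof of Theorem \ref{thm1} is the single sentence that it is obtained by ``summarizing the results of Lemmas \ref{lem1}--\ref{lem4},'' and your write-up is that summary made explicit as a mutually-annihilating-subalgebras argument, with the splitting $L=M\oplus N$ for $M=(sl_2^1\oplus R)\dotplus I$ and $N=sl_2^2\oplus\dots\oplus sl_2^s$. The closure of $M$ is fine (one should perhaps note that $[sl_2^1,R]$ and $[R,sl_2^1]$ need only land in $I\subseteq M$, which they do since their images in $L/I$ vanish), and Lemmas \ref{lem2} and \ref{lem4} do supply both left and right vanishing for the $sl_2^i$--$sl_2^j$ and $R$--$sl_2^i$ products, as you say.

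The one step where your stated justification fails is precisely the point you flagged as delicate: the left products $[sl_2^j,I]$ for $j\geq 2$. Neither of your two offered reasons works as written. The triples used in Lemma \ref{lem4} are $(y_s,e_1,a)$, $(y_s,f_1,a)$, $(a,y_s,e_1)$, $(a,y_s,f_1)$ with $y_s\in R$; they concern the $R$--$sl_2^i$ interaction and say nothing about $I$. And the remark that $I$ is a right $sl_2^1$-module on which $N$ acts trivially only controls right multiplications, which, as you yourself observe, do not determine left multiplications in a Leibniz algebra. The correct justification is a standard one-liner that the paper implicitly relies on: by Barnes's result, recorded in Section 2, the ideal $I$ is spanned by squares $[y,y]$, and the Leibniz identity with $z=y$ gives $[x,[y,y]]=[[x,y],y]-[[x,y],y]=0$, hence $[L,I]=0$ and in particular $[sl_2^j,I]=0$ for all $j$. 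With that substitution your verification is complete and agrees with the paper.
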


As a result of Theorems \ref{thm01}--\ref{thm02} and \ref{thm1}, we have the following  corollaries.

\begin{cor} Let $L/I \cong sl_2^1\oplus sl_2^2\oplus \dots \oplus sl_2^s\oplus R$ with $dimR=2$ and $dimI\neq3.$ Then $L$ is isomorphic to the following algebra:
$$\left\{\begin{array}{lll}
[e_j,h_j]=-[h_j,e_j]=2e_j, & [h_j,f_j]=-[f_j,h_j]=2f_j,\\[1mm]
[e_j,f_j]=-[f_j,e_j]=h_j, &  1\leq j\leq s,\\[1mm]
[y_1,{y_2}]=-[{y_2},y_1]=y_1,&\\[1mm]
[x_k,h_1]=(m-2k)x_k, & 0\leq k\leq m,&\\[1mm]
[x_k,f_1]=x_{k+1},  & 0\leq k\leq m-1, & \\[1mm]
[x_k,e_1]=-k(m+1-k)x_{k-1}, & 1\leq k\leq m, & \\[1mm]
[x_k,{y_2}]=a x_k, & 0\leq k\leq m, \  a\in \mathbb{C}.&
\end{array}\right.$$
\end{cor}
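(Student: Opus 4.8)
The plan is to deduce the Corollary directly by combining the structural decomposition of Theorem \ref{thm1} with the classification in the case $\dim R=2$ provided by Theorem \ref{thm01}. First I would invoke Theorem \ref{thm1}: under the hypotheses $L/I\cong sl_2^1\oplus sl_2^2\oplus\dots\oplus sl_2^s\oplus R$ with $I$ a right irreducible $sl_2^1$-module and $\dim I\neq 3$, we obtain the isomorphism
$$L\cong\bigl((sl_2^1\oplus R)\dotplus I\bigr)\oplus sl_2^2\oplus\dots\oplus sl_2^s.$$
This reduces the problem to describing the first summand $(sl_2^1\oplus R)\dotplus I$, since Lemma \ref{lem2} and Lemma \ref{lem4} guarantee that the remaining copies $sl_2^2,\dots,sl_2^s$ form a direct Lie-algebra summand whose only nonzero products are the internal $sl_2$-relations $[e_j,h_j]=2e_j$, $[h_j,f_j]=2f_j$, $[e_j,f_j]=h_j$ for $2\leq j\leq s$.

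Next I would analyze the summand $(sl_2^1\oplus R)\dotplus I$. By construction this is itself a Leibniz algebra whose associated Lie algebra is $sl_2^1\oplus R$ with $R$ two-dimensional and $I$ an irreducible right $sl_2^1$-module of dimension $\neq 3$. This is precisely the hypothesis of Theorem \ref{thm01}. Applying that theorem yields a basis $\{e_1,h_1,f_1,x_0,\dots,x_m,y_1,y_2\}$ of this summand in which the multiplication takes the stated normal form: the $sl_2^1$-relations, the solvable-radical relation $[y_1,y_2]=-[y_2,y_1]=y_1$, the action $[x_k,y_2]=a x_k$ with $a\in\mathbb{C}$, and the irreducible-module relations $[x_k,h_1]=(m-2k)x_k$, $[x_k,f_1]=x_{k+1}$, $[x_k,e_1]=-k(m+1-k)x_{k-1}$.

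Finally I would assemble the two pieces. Writing down the combined multiplication table on the union of bases and using the orthogonality relations from Lemmas \ref{lem2} and \ref{lem4} (which force all cross-products between $sl_2^j$ for $j\geq 2$ and the first summand to vanish, and all cross-products among distinct $sl_2^i$ to vanish), one recovers exactly the displayed table, where the index $j$ ranges over $1\leq j\leq s$ for the $sl_2^j$-relations and the remaining relations involve only $sl_2^1$, the $x_k$'s, and $y_1,y_2$. I expect the main (and essentially only) obstacle to be purely bookkeeping: one must verify that the normalization of the basis $\{e_1,h_1,f_1\}$ and of the radical generators $y_1,y_2$ coming from Theorem \ref{thm01} is compatible with the canonical $sl_2^j$-bases for $j\geq 2$, so that no hidden cross-terms or scaling discrepancies appear when the summands are glued. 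Since the decomposition in Theorem \ref{thm1} is a genuine direct (algebra) sum, this compatibility is automatic, and no substantive new computation is required.
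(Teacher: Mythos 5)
Your proposal is correct and follows exactly the paper's own route: the paper derives this corollary precisely by combining the direct-sum decomposition $L\cong((sl_2^1\oplus R)\dotplus I)\oplus sl_2^2\oplus\dots\oplus sl_2^s$ of Theorem \ref{thm1} with the $\dim R=2$ classification of Theorem \ref{thm01}, just as you do. Your closing remark that the gluing is automatic because the decomposition is a genuine direct sum of algebras is also the (implicit) reason the paper states the corollary without further proof.
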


\begin{cor} Let $L/I \cong sl_2^1\oplus sl_2^2\oplus \dots \oplus sl_2^s\oplus R,$  with $dimR=3$ and $dimI\neq3.$ Then $L$ is isomorphic to the following non-isomorphic algebras:
$$\begin{array}{ll}
L_1(\alpha,a):&\left\{\begin{array}{lll}
[e_j,h_j]=-[h_j,e_j]=2e_j, & [h_j,f_j]=-[f_j,h_j]=2f_j, &\\[1mm]
[e_j,f_j]=-[f_j,e_j]=h_j, & 1\leq j\leq s,\\[1mm]
[y_1,y_2]=-[y_2,y_1]=y_1, & [y_3,y_2]=-[y_2,y_3]=\alpha y_3, & \\[1mm]
[x_k,h_1]=(m-2k)x_k, & 0 \leq k \leq m,&\\[1mm]
[x_k,f_1]=x_{k+1},  & 0 \leq k \leq m-1,& \\[1mm]
[x_k,e_1]=-k(m+1-k)x_{k-1}, & 1 \leq k \leq m,&\\[1mm]
[x_i,y_2]=a x_i, & 0 \leq i \leq m, \\[1mm]
 \end{array}\right.\\[25mm]
L_2(a):& \left\{\begin{array}{lll}
[e_j,h_j]=-[h_j,e_j]=2e_j, & [h_j,f_j]=-[f_j,h_j]=2f_j, & \\[1mm]
[e_j,f_j]=-[f_j,e_j]=h_j, & 1\leq j\leq s,\\[1mm]
[y_1,y_2]=-[y_2,y_1]=y_1+ y_3, & [y_3,y_2]=-[y_2,y_3]=y_3,&\\[1mm]
[x_k,h_1]=(m-2k)x_k, & 0 \leq k \leq m,&\\[1mm]
[x_k,f_1]=x_{k+1},  & 0 \leq k \leq m-1, & \\[1mm]
[x_k,e_1]=-k(m+1-k)x_{k-1}, & 1 \leq k \leq m, &\\[1mm]
[x_i,y_2]=a x_i, & 0 \leq i \leq m.&
 \end{array}\right.
 \end{array}$$
\end{cor}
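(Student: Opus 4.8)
The plan is to reduce the statement to Theorem \ref{thm02} by means of the structural decomposition recorded in Theorem \ref{thm1}, and then to reassemble the extra simple summands. First I would apply Theorem \ref{thm1}, whose hypotheses $(i)$--$(ii)$ hold by assumption, to obtain
$$L\cong\big((sl_2^1\oplus R)\dotplus I\big)\oplus sl_2^2\oplus\dots\oplus sl_2^s.$$
Write $A=(sl_2^1\oplus R)\dotplus I$ for the first summand. Since for $x=a+\sum_{j\geq2}s_j$ with $a\in A$ and $s_j\in sl_2^j$ all cross products vanish in a direct sum and $[s_j,s_j]=0$ in each Lie summand, we have $[x,x]=[a,a]\in A$; hence the square-ideal of $L$, namely $I$, is also the square-ideal of $A$. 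Consequently $A/I\cong sl_2^1\oplus R$ with $\dim R=3$, and by $(ii)$ the ideal $I$ is an irreducible right $sl_2^1$-module with $\dim I\neq 3$. Thus $A$ satisfies precisely the hypotheses of Theorem \ref{thm02}.

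Next I would invoke Theorem \ref{thm02} for $A$: upon relabelling the canonical basis of $sl_2$ as that of $sl_2^1$, the algebra $A$ is isomorphic to one of the two families $L_1(\alpha,a)$ or $L_2(a)$ of that theorem. It then remains only to restore the direct sum. By the definition of direct sum of algebras every product between $A$ and $sl_2^j$ ($2\leq j\leq s$) vanishes, while inside each $sl_2^j$ the canonical relations
$$[e_j,h_j]=-[h_j,e_j]=2e_j,\quad [h_j,f_j]=-[f_j,h_j]=2f_j,\quad [e_j,f_j]=-[f_j,e_j]=h_j$$
hold. Adjoining these relations for $2\leq j\leq s$ to the two tables furnished by Theorem \ref{thm02}, and merging them with the $sl_2^1$ relations already carried by $A$ into the uniform range $1\leq j\leq s$, reproduces verbatim the two tables $L_1(\alpha,a)$ and $L_2(a)$ displayed in the statement. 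This establishes that every $L$ under consideration is isomorphic to one of the listed algebras.

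Finally I would verify that the listed algebras are non-isomorphic, which I expect to be the main obstacle. Any isomorphism of Leibniz algebras preserves the square-ideal $I$, hence restricts to a bijection of $I$ and sends a Levi subalgebra to a Levi subalgebra; within such a semisimple subalgebra the decomposition into simple ideals is canonical by Cartan--Killing theory. The summand $sl_2^1$ is distinguished among these simple ideals by the property $[I,sl_2^1]=I\neq0$, whereas $[I,sl_2^j]=0$ for $j\geq2$ by Lemma \ref{lem1}; since $\varphi(I)=I$ this property is isomorphism-invariant, forcing any isomorphism to carry the $sl_2^1$-part to the $sl_2^1$-part and the complementary ideal $sl_2^2\oplus\dots\oplus sl_2^s$ to itself. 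Therefore an isomorphism between two of our algebras restricts to an isomorphism of the corresponding $A$-parts, and the classification of isomorphism classes reduces to the non-isomorphism statement of Theorem \ref{thm02}. In particular the families $L_1(\alpha,a)$ and $L_2(a)$ stay distinct, and the admissible parameter ranges for $\alpha$ and $a$ are inherited unchanged from Theorem \ref{thm02}.
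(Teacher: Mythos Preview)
Your proposal is correct and follows precisely the route the paper takes: the corollary is stated in the paper without proof, merely as an immediate consequence of Theorems~\ref{thm02} and~\ref{thm1} (the paper writes ``As a result of Theorems \ref{thm01}--\ref{thm02} and \ref{thm1}, we have the following corollaries''). Your argument spells out this deduction in detail---including the non-isomorphism reduction, which the paper leaves entirely implicit---but the underlying strategy is identical.
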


\section{On Leibniz algebras with semisimple part $sl_2^1\oplus sl_2^2.$}

Let the quotient Lie algebra $L/I$ for a Leibniz algebra $L$ be
isomorphic to a direct sum of two copies of the $sl_2$, i.e.,
$L/I\cong sl_2\oplus sl_2$. In this section we shall investigate
the case when the ideal $I$ is reducible over each copy of
$sl_2.$ In order to distinguish copies of $sl_2$ we shall denote
them by $sl_2^1$ and $sl_2^2$. One can assume that $I$ is
reducible over $sl_2^1$. Due to Theorem \ref{thm25} we have the
following decomposition:
$$I=I_{1,1}\oplus I_{1,2}\oplus...\oplus I_{1,s+1},$$ where $I_{1,j}, \ 1\leq j\leq s+1$ are irreducible $sl_2^1$-modules.

We shall focus our study on the case when $dimI_{1,1}=dimI_{1,2}=\dots =dimI_{1,s+1}=m+1.$

Let us introduce notations as follows:
$$I_{1,j}=<x_0^j, x_1^j, \dots, x_m^j>, \ 1\leq j \leq s+1.$$

In the proof of Theorem 3.7 of the paper \cite{Sam} it was proved that

$$[x_i^j,e_2]=\sum\limits_{k=1}^{s+1}a_{i,j}^kx_i^k, \quad [x_i^j,f_2]=\sum\limits_{k=1}^{s+1}b_{i,j}^kx_i^k, \quad [x_i^j,h_2]=\sum\limits_{k=1}^{s+1}c_{i,j}^kx_i^k,$$
where $0\leq i\leq m, \ 1\leq j\leq s+1.$

Without loss of generality, one can assume that the products $[I_{1,j},sl_2^1], \ 1\leq j\leq s+1$ are expressed as follows:
$$\begin{array}{lll}
\, [x_i^j,e_1]=-i(m+1-i)x_{i-1}^j, & 1 \leq i \leq m, \\
\, [x_i^j,f_1]=x_{i+1}^j,  & 0 \leq i \leq m-1, \\
\, [x_i^j,h_1]=(m-2i)x_i^j, & 0 \leq i \leq m.\\
\end{array}$$

\begin{prop} \label{prop1} Let $L/I$ be isomorphic to $sl_2^1\oplus sl_2^2,$ where $I=I_{1,1}\oplus I_{1,2}\oplus...\oplus I_{1,s+1}$ with $dimI_{1,j}=m+1$ and $I_{1,j}$ are irreducible $sl_2^1$-modules for $1\leq j\leq s+1$. Then
$$\begin{array}{lll}
\, [x_i^j,e_2]=\sum\limits_{k=1}^{s+1}a_j^kx_i^k, & [x_i^j,f_2]=\sum\limits_{k=1}^{s+1}b_j^kx_i^k, & [x_i^j,h_2]=\sum\limits_{k=1}^{s+1}c_j^kx_i^k, \\ \end{array}$$
where $0\leq i\leq m, \ 1\leq j\leq s+1.$
\end{prop}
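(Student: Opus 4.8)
The plan is to show that the coefficients appearing in the expansions of $[x_i^j,e_2]$, $[x_i^j,f_2]$ and $[x_i^j,h_2]$ established in \cite{Sam} do not in fact depend on the index $i$. The mechanism is that the basis $\{x_0^j,x_1^j,\dots,x_m^j\}$ of each $I_{1,j}$ is generated from $x_0^j$ through the $sl_2^1$-action of $f_1$, namely $x_{i+1}^j=[x_i^j,f_1]$ for $0\leq i\leq m-1$. I would apply the $sl_2^2$-action to this relation and reduce it with the Leibniz identity to produce a recursion linking level $i$ with level $i+1$.

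Concretely, writing $w_2$ for any of $e_2,f_2,h_2$, I first apply the Leibniz identity to the triple $(x_i^j,f_1,w_2)$:
$$[x_{i+1}^j,w_2]=[[x_i^j,f_1],w_2]=[x_i^j,[f_1,w_2]]+[[x_i^j,w_2],f_1].$$
The crucial point, and the only place any genuine input is needed, is that the cross term $[x_i^j,[f_1,w_2]]$ vanishes. Since $L/I\cong sl_2^1\oplus sl_2^2$ is a direct sum of ideals, the natural homomorphism $\varphi\colon L\to L/I$ sends $[f_1,w_2]$ to $[\varphi(f_1),\varphi(w_2)]=0$, whence $[f_1,w_2]\in I$; as $I$ is abelian and $x_i^j\in I$, it follows that $[x_i^j,[f_1,w_2]]=0$.

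With the cross term gone, I substitute the known expansion $[x_i^j,w_2]=\sum_{k=1}^{s+1}\lambda_{i,j}^k x_i^k$ (where $\lambda$ denotes $a$, $b$ or $c$) and use $[x_i^k,f_1]=x_{i+1}^k$ to obtain
$$[x_{i+1}^j,w_2]=\sum_{k=1}^{s+1}\lambda_{i,j}^k[x_i^k,f_1]=\sum_{k=1}^{s+1}\lambda_{i,j}^k x_{i+1}^k.$$
Comparing this with $[x_{i+1}^j,w_2]=\sum_{k=1}^{s+1}\lambda_{i+1,j}^k x_{i+1}^k$ and using the linear independence of $\{x_{i+1}^1,\dots,x_{i+1}^{s+1}\}$ forces $\lambda_{i+1,j}^k=\lambda_{i,j}^k$ for every $0\leq i\leq m-1$. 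By induction on $i$ the coefficient is independent of $i$, so setting $a_j^k:=a_{0,j}^k$, $b_j^k:=b_{0,j}^k$ and $c_j^k:=c_{0,j}^k$ yields the claimed form. The computation is uniform in $w_2$, so no step presents a real obstacle; the single thing to get right is the vanishing of the cross term, which rests on $[f_1,w_2]\in I$ together with the abelianness of $I$.
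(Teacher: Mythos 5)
Your proposal is correct and follows essentially the same route as the paper: an induction on $i$ via the Leibniz identity applied to a triple consisting of a module element, an $sl_2^1$ generator and an $sl_2^2$ generator, with the cross term killed because $[sl_2^1,sl_2^2]\subseteq I$ and $I$ is abelian. The only (cosmetic) difference is that you use the raising operator $f_1$ (coefficient $1$, handled uniformly for $w_2\in\{e_2,f_2,h_2\}$) where the paper uses the lowering operator $e_1$ via $0=[x_{i+1}^j,[e_1,w_2]]$, which forces it to carry and cancel the nonzero factor $(i+1)(m-i)$ and to treat $e_2$, $f_2$, $h_2$ separately.
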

\begin{proof}

Applying Leibniz identity for the following triples of elements:
$$ (x_1^j, e_1, e_2), \ 1\leq j\leq s+1$$
we derive the restrictions:
$$ a_{1,j}^k=a_{0,j}^k,  \ \ \ 1\leq k\leq s+1.$$

Consequently, we obtain
$$[x_0^j,e_2]=\sum\limits_{k=1}^{s+1}a_{0,j}^kx_0^k, \quad [x_1^j,e_2]=\sum\limits_{k=1}^{s+1}a_{0,j}^kx_1^k, \quad 1\leq j\leq s+1.$$

By induction, we shall prove the equality
\begin{equation} \label{eq1}
[x_i^j,e_2]=\sum\limits_{k=1}^{s+1}a_{0,j}^kx_i^k, \quad 0\leq i\leq m.
\end{equation}

Using the assumption of correctness of Equality \ref{eq1} for $i$ in the following chain of equalities:
$$\begin{array}{l}
0=[x_{i+1}^j,[e_1,e_2]]=[[x_{i+1}^j,e_1],e_2]-[[x_{i+1}^j,e_2],e_1]=-[(i+1)(m-i)x_i^j,e_2]-\\[2mm]
-\sum\limits_{k=1}^{s+1}a_{i+1,j}^k[x_{i+1}^k,e_1]=
-(i+1)(m-i)\sum\limits_{k=1}^{s+1}a_{0,j}^kx_i^k+
\sum\limits_{k=1}^{s+1}a_{i+1,j}^k(i+1)(m-i)x_i^k,
\end{array}$$
we conclude that $a_{i+1,j}^k=a_{0,j}^k$ for $1\leq k\leq s+1$, that is,
$[x_{i+1}^j,e_2]=\sum\limits_{k=1}^{s+1}a_{0,j}^kx_{i+1}^k$ and Equality \ref{eq1} is proved.


Putting $a_j^k=a_{0,j}^k,$ we have $[x_i^j,e_2]=\sum\limits_{k=1}^{s+1}a_j^kx_i^k, \ 1\leq j\leq s+1, \ 0\leq i\leq m.$

Applying Leibniz identity for the triples of elements: $$(x_1^j, e_1, f_2), \quad 1\leq j\leq s+1,$$
we get
$$b_{1,j}^k=b_{0,j}^k,  \ 1\leq k\leq s+1.$$

Therefore, we obtain
$$[x_0^j,f_2]=\sum\limits_{k=1}^{s+1}b_{0,j}^kx_0^k, \quad [x_1^j,f_2]=\sum\limits_{k=1}^{s+1}b_{0,j}^kx_1^k, \ 1\leq j\leq s+1.$$

Applying induction and the following chain of equalities

$$\begin{array}{l}
0=[x_{i+1}^j,[e_1,f_2]]=[[x_{i+1}^j,e_1],f_2]-[[x_{i+1}^j,f_2],e_1]=-[(i+1)(m-i)x_i^j,f_2]-\\[2mm]
-\sum\limits_{k=1}^{s+1}b_{i+1,j}^k[x_{i+1}^k,e_1]=
-(i+1)(m-i)\sum\limits_{k=1}^{s+1}b_{0,j}^kx_i^k+
\sum\limits_{k=1}^{s+1}b_{i+1,j}^k(i+1)(m-i)x_i^k,
\end{array}$$

we derive the equality
$$[x_i^j,f_2]=\sum\limits_{k=1}^{s+1}b_{0,j}^kx_i^k, \ 0\leq i\leq m, \ 1\leq j\leq s+1.$$

Setting $b_j^k=b_{0,j}^k,$ we obtain
$[x_i^j,f_2]=\sum\limits_{k=1}^{s+1}b_j^kx_i^k, \ 0\leq i\leq m, \ 1\leq j\leq s+1.$

Analogously, one can prove the equality
$[x_i^j,h_2]=\sum\limits_{k=1}^{s+1}c_j^kx_i^k$ with $ 1\leq j\leq s+1.$
\end{proof}

Now we shall describe Leibniz algebras such that $L/I\cong sl_2^1\oplus sl_2^2$ and $I=I_{1,1}\oplus I_{1,2},$ where $I_{1,1}, I_{1,2}$ are irreducible $sl_2^1$-modules.
Without loss of generality we can suppose
$$\begin{array}{lll}
\, [x_i^j,h_1]=(m-2i)x_i^j, & 0 \leq i \leq m,\\
\, [x_i^j,f_1]=x_{i+1}^j,  & 0 \leq i \leq m-1,\\
\, [x_i^j,e_1]=-i(m+1-i)x_{i-1}^j, & 1 \leq i \leq m.\\
 \end{array}$$
for $j=1, 2.$

According to Proposition \ref{prop1}, one can assume that
$$\begin{array}{lll}
\, [x_i^1,e_2]=a_1x_i^1+a_2x_i^2,& [x_i^2,e_2]=a_3x_i^1+a_4x_i^2,\\
\, [x_i^1,f_2]=b_1x_i^1+b_2x_i^2,& [x_i^2,f_2]=b_3x_i^1+b_4x_i^2,\\
\, [x_i^1,h_2]=c_1x_i^1+c_2x_i^2,& [x_i^2,h_2]=c_3x_i^1+c_4x_i^2,\\
 \end{array}$$
where  $0\leq i\leq m.$

From the following chains of the equalities obtained applying Leibniz identity
$$\begin{array}{l}
2(a_1x_0^1+a_2x_0^2)=2[x_0^1,e_2]=[x_0^1,[e_2,h_2]]=(a_2c_3-c_2a_3)x_0^1+(a_1c_2+a_2c_4-c_1a_2-c_2a_4)x_0^2,\\[2mm]
2(a_3x_0^1+a_4x_0^2)=2[x_0^2,e_2]=[x_0^2,[e_2,h_2]]=(a_3c_1+a_4c_3-c_3a_1-c_4a_3)x_0^1+(a_3c_2-a_2c_3)x_0^2,\\[2mm]
-2(b_1x_0^1+b_2x_0^2)=-2[x_0^1,f_2]=[x_0^1,[f_2,h_2]]=(b_2c_3-c_2b_3)x_0^1+(b_1c_2+b_2c_4-c_1b_2-c_2b_4)x_0^2,\\[2mm]
-2(b_3x_0^1+b_4x_0^2)=2[x_0^2,e_2]=[x_0^2,[f_2,h_2]]=(b_3c_1+b_4c_3-c_3b_1-c_4b_3)x_0^1+(b_3c_2-b_2c_3)x_0^2,\\[2mm]
-c_1x_1^1-c_2x_1^2=-[x_1^1,h_2]=[x_1^1,[f_2,e_2]]=(a_3b_2-a_2b_3)x_1^1+2(a_2b_1-a_1b_2)x_1^2,\\[2mm]
-c_3x_1^1-c_4x_1^2=-[x_1^2,h_2]=[x_1^2,[f_2,e_2]]=2(a_1b_3-a_3b_1)x_1^1+(a_2b_3-a_3b_2)x_1^2.
\end{array}$$
we derive:
\begin{equation}\label{eq777}
\left\{\begin{array}{l}
2a_1=a_2c_3-a_3c_2,  \\[1mm]
2a_2=a_1c_2+a_2c_4-c_1a_2-c_2a_4, \\[1mm]
2a_3=a_3c_1+a_4c_3-c_3a_1-c_4a_3,  \\[1mm]
2a_4=a_3c_2-a_2c_3,\\[1mm]
-2b_1=b_2c_3-c_2b_3,  \\[1mm]
-2b_2=b_1c_2+b_2c_4-c_1b_2-c_2b_4,\\[1mm]
-2b_3=b_3c_1+b_4c_3-c_3b_1-c_4b_3,  \\[1mm]
-2b_4=b_3c_2-b_2c_3,\\[1mm]
c_1=a_2b_3-a_3b_2,  \\[1mm]
c_2=2(a_1b_2-a_2b_1), \\[1mm]
c_3=2(a_3b_1-a_1b_3), \\[1mm]
c_4=a_3b_2-a_2b_3.
\end{array} \right.
\end{equation}
It is easy to see that $a_4=-a_1, b_4=-b_1$ and $c_4=-c_1.$

Thus, we obtain the following products:
\begin{equation} \label{eq333}
\begin{array}{ll}
\, [x_i^1,e_2]=a_1x_i^1+a_2x_i^2,& [x_i^1,f_2]=b_1x_i^1+b_2x_i^2,\\[1mm]
\, [x_i^2,e_2]=a_3x_i^1-a_1x_i^2,& [x_i^2,f_2]=b_3x_i^1-b_1x_i^2,\\[1mm]
\, [x_i^1,h_2]=(a_2b_3-a_3b_2)x_i^1+2(a_1b_2-a_2b_1)x_i^2,& \\[1mm]
\, [x_i^2,h_2]=2(a_3b_1-a_1b_3)x_i^1-(a_2b_3-a_3b_2)x_i^2,&
 \end{array}
 \end{equation}
where the structure constants $a_1, a_2, a_3$ and $b_1, b_2, b_3$ satisfy the relations (\ref{eq777}).

We present the classification of Leibniz algebras satisfying the following conditions:
\begin{itemize}
\item[$(a)$] $L/I\cong sl_2^1\oplus sl_2^2;$

\item[$(b)$] $I=I_{1,1}\oplus I_{1,2}$ such that $I_{1,1}, I_{1,2}$ are irreducible $sl_2^1$-modules and  $dimI_{1,1}=dimI_{1,2};$

\item[$(c)$] $I=I_{2,1}\oplus I_{2,2}\oplus...\oplus I_{2,m+1}$ such that $I_{2,k}$ are irreducible $sl_2^2$-modules with $1\leq k\leq m+1$.
\end{itemize}

\begin{thm} \label{thm2} An arbitrary Leibniz algebra satisfying the conditions (a)-(c) is isomorphic to the following algebra:
$$\left\{\begin{array}{lll}
\, [e_i,h_i]=-[h_i,e_i]=2e_i, & \\
\, [e_i,f_i]=-[f_i,e_i]=h_i, &   & \\
\, [h_i,f_i]=-[f_i,h_i]=2f_i, & \\
\, [x_k^i,h_1]=(m-2k)x_k^i, & 0 \leq k \leq m ,\\
\, [x_k^i,f_1]=x_{k+1}^i,  & 0 \leq k \leq m-1, \\
\, [x_k^i,e_1]=-k(m+1-k)x_{k-1}^i, & 1 \leq k \leq m, \\
\, [x_j^1,e_2]=[x_j^2,h_2]=x_j^2,&\\
\, [x_j^1,h_2]=[x_j^2,f_2]=-x_j^1,&
 \end{array}\right.$$
with $1\leq i\leq2$ and $ 0\leq j\leq m.$
\end{thm}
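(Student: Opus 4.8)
The plan is to collapse the entire problem to a single two-dimensional representation of $sl_2^2$ and then to normalise it. By Proposition \ref{prop1} and the reduction recorded in (\ref{eq333}), the products of $sl_2^1$ with $I$ and the mutual products of the two copies of $sl_2$ are already fixed, and all remaining freedom sits in the six constants $a_1,a_2,a_3,b_1,b_2,b_3$ subject to (\ref{eq777}). The decisive structural observation, visible in (\ref{eq333}), is that these constants do \emph{not} depend on $i$: each subspace $V_i=\langle x_i^1,x_i^2\rangle$ is an $sl_2^2$-submodule carrying exactly the same action, and $I=\bigoplus_{i=0}^{m}V_i$ as $sl_2^2$-module. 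So I would first write $\rho_{e_2},\rho_{f_2},\rho_{h_2}$ for the right multiplications by $e_2,f_2,h_2$ on $V_i$ and check, directly from the Leibniz identity as in (\ref{eq777}), that with the sign twist $E:=\rho_{e_2}$, $F:=-\rho_{f_2}$, $H:=\rho_{h_2}$ these operators satisfy $[H,E]=2E$, $[H,F]=-2F$, $[E,F]=H$; that is, each $V_i$ is a genuine two-dimensional $sl_2^2$-module.

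Next I would extract the force of hypothesis (c). Since $I=\bigoplus_i V_i$ is a decomposition into two-dimensional $sl_2^2$-submodules and $\dim I=2(m+1)$, the number of irreducible $sl_2^2$-constituents of $I$ equals $m+1$ exactly when every $V_i$ is irreducible; were the $sl_2^2$-action to vanish on some $V_i$, that summand would split into two trivial constituents and the total count would exceed $m+1$, contradicting (c) (and Theorem \ref{thm28}, which already produces $m+1$ submodules of dimension $2$). Hence the action on each $V_i$ is nonzero, and by Weyl's theorem (Theorem \ref{thm25}) a nonzero two-dimensional $sl_2$-module must be the unique irreducible one. In particular $H=\rho_{h_2}$ has eigenvalues $\pm1$, i.e. $\det H=-1$, which in the constants reads $(a_2b_3-a_3b_2)^2+4(a_1b_2-a_2b_1)(a_3b_1-a_1b_3)=1$.

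The final step is to construct the isomorphism. Because $I_{1,1}$ and $I_{1,2}$ are isomorphic irreducible $sl_2^1$-modules, Schur's lemma identifies $\mathrm{End}_{sl_2^1}(I)$ with $\mathrm{Mat}_2(\mathbb{C})$, so the base changes preserving the already-fixed $sl_2^1$-table are exactly the uniform actions of a single invertible $P\in GL_2(\mathbb{C})$ on each pair $(x_i^1,x_i^2)$. Such a $P$ conjugates the $i$-independent matrices $\rho_{e_2},\rho_{f_2},\rho_{h_2}$ simultaneously on all $V_i$. I would take $P$ to be the eigenvector matrix of $H$ (its eigenvalues $-1,1$ are distinct, so $H$ is diagonalisable), ordered so that $[x_i^1,h_2]=-x_i^1$ and $[x_i^2,h_2]=x_i^2$; in this weight basis $\rho_{e_2}$ raises and $\rho_{f_2}$ lowers the weight, whence $[x_i^1,e_2]=\lambda x_i^2$, $[x_i^2,e_2]=0$, $[x_i^2,f_2]=-\mu x_i^1$, $[x_i^1,f_2]=0$ with $\lambda\mu=1$ forced by $[E,F]=H$. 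A further uniform diagonal rescaling $x_i^2\mapsto\lambda x_i^2$ (again in $GL_2$ and commuting with the diagonal $H$) normalises $\lambda=\mu=1$, producing precisely the table of the statement.

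The hard part will not be the computations deriving (\ref{eq777})--(\ref{eq333}) or checking the $sl_2$-relations, which are routine, but the bookkeeping of \emph{admissible} changes of basis: one must argue carefully that the only transformations at our disposal are the uniform $GL_2$-actions coming from $\mathrm{End}_{sl_2^1}(I)\cong\mathrm{Mat}_2(\mathbb{C})$ (so the $sl_2^1$-part is left intact), and then verify that this $GL_2$ together with the diagonal rescaling already suffices to reach the canonical form, with no extra automorphism of $sl_2^2$ required. Equivalently, on the level of structure constants, the obstacle is to show that the whole solution set of (\ref{eq777}) with $(a_1,\dots,b_3)\neq0$ is a single $GL_2$-orbit.
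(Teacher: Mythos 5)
Your proof is correct, and it reaches the paper's normal form by a genuinely different route. The paper never diagonalizes anything and never invokes Schur's lemma or weight theory: it uses Theorem \ref{thm28} to get $\dim I_{2,k}=2$, writes explicit bases $y_{i+1}^1=\alpha_i^1x_i^1+\alpha_i^2x_i^2$, $y_{i+1}^2=\beta_i^1x_i^1+\beta_i^2x_i^2$ of these two-dimensional irreducible $sl_2^2$-modules, and extracts from the annihilation conditions $[y_{i+1}^1,e_2]=[y_{i+1}^2,f_2]=0$ (the system (\ref{eq444}) and its analogue) the determinant conditions $a_1^2+a_2a_3=b_1^2+b_2b_3=0$; it then normalizes by two concrete uniform substitutions, first $x_i^{1'}=Ax_i^1+Bx_i^2$, $x_i^{2'}=(Aa_1+Ba_3)x_i^1+(Aa_2-Ba_1)x_i^2$ --- which is exactly $x_i^{2'}=[x_i^{1'},e_2]$, i.e.\ a Jordan-type basis for the nilpotent operator $\rho_{e_2}$ --- and then the shear $x_i^{1'}=x_i^1+b_1x_i^2$. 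You instead twist the right action into an honest representation (your signs $E=\rho_{e_2}$, $F=-\rho_{f_2}$, $H=\rho_{h_2}$ do check out against (\ref{eq777}), since a right Leibniz action of a Lie subalgebra is an anti-representation), use condition (c) together with the invariance of the number of irreducible constituents to force each $V_i$ to be the unique two-dimensional irreducible, and then normalize in a weight basis of $H$. Both proofs rest on the same two pillars --- the $i$-independence from Proposition \ref{prop1}, which lets a single $GL_2(\mathbb{C})$ act uniformly on all pairs $(x_i^1,x_i^2)$ (you legitimize this via $\mathrm{End}_{sl_2^1}(I)\cong \mathrm{Mat}_2(\mathbb{C})$; the paper simply performs the substitution, and for existence of the isomorphism ``at least these are admissible'' is all one needs, so your worry about ``only'' is beside the point), and condition (c) to exclude the trivial action. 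One further remark: the difficulty you flag in your last paragraph is not actually outstanding, because your own second step already resolves it --- a nonzero solution of (\ref{eq777}) is precisely a nonzero two-dimensional representation of $sl_2$, hence irreducible and unique up to equivalence, so the nonzero solution set of (\ref{eq777}) is automatically a single $GL_2$-orbit and no automorphism of $sl_2^2$ is required. (This viewpoint also shows that the paper's conditions $a_1^2+a_2a_3=b_1^2+b_2b_3=0$ are nothing but the nilpotency of $e_2,f_2$ in any finite-dimensional representation, so in fact they follow from (\ref{eq777}) for any nonzero action; the paper's derivation via (\ref{eq444}) uses (c) where only the exclusion of the zero action is truly needed.) What your route buys is conceptual economy and this sharper insight; what the paper's buys is a fully explicit, elementary computation whose intermediate products, e.g.\ (\ref{eq666}), are convenient for the proof of the theorem that follows.
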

\begin{proof}
We set $dimI_{1,1}=dimI_{1,2}=m+1.$ Then, according to Theorem \ref{thm28}, we obtain $dim I_{2,k}=2$ for $1\leq k\leq m+1.$

Let $\{x_0^1,x_1^1,...,x_m^1\}, \ \{x_0^2,x_1^2,...,x_m^2\}$ and $\{y_{i+1}^1,y_{i+1}^2\}$ be bases of $I_{1,1}, \ I_{1,2}$ and $I_{2,i+1}, \  0\leq i\leq m$, respectively.
We set
\begin{equation} \label{eq4444}
y_{i+1}^1=\alpha_{i}^1x_{i}^1+\alpha_{i}^2x_{i}^2, \quad y_{i+1}^2=\beta_{i}^1x_{i}^1+\beta_{i}^2x_{i}^2, \quad 0\leq i\leq m.
\end{equation}

Taking into account the products (\ref{eq333}) for $0\leq i\leq m$ we consider the equalities
$$\begin{array}{l}
0=[y_{i+1}^2,f_2]=[\beta_i^1x_i^1+
\beta_i^2x_i^2,f_2]=\beta_i^1(b_1x_i^1+b_2x_i^2)+\\
+\beta_i^2(b_3x_i^1-b_1x_i^2)=
(\beta_i^1b_1+\beta_i^2b_3)x_i^1+
(\beta_i^1b_2-\beta_i^2b_1)x_i^2.
\end{array}$$

Therefore,
\begin{equation} \label{eq444}
\left\{\begin{array}{l}
\beta_i^1b_1+\beta_i^2b_3=0,  \\[1mm]
\beta_i^1b_2-\beta_i^2b_1=0,
\end{array} \right.
\end{equation}
with $0\leq i\leq m.$

If $b_1^2+b_2b_3\neq0,$ then the system of equations (\ref{eq444}) has only trivial solution, which is a contradiction. Hence, $b_1^2+b_2b_3=0.$

Similarly, from
$$0=[y_{i+1}^1,e_2]=(\alpha_i^1a_1+\alpha_i^2a_3)x_i^1+
(\alpha_i^1a_2-\alpha_i^2a_1)x_i^2$$
we derive $a_1^2+a_2a_3=0.$

Thus, we have $a_1=\pm i\sqrt{a_2a_3}$ and $b_1=\pm
i\sqrt{b_2b_3}.$

Let us summarize the obtained products:
\begin{equation} \label{eq666}
\begin{array}{ll}
\, [x_i^1,e_2]=a_1x_i^1+a_2x_i^2,& [x_i^1,f_2]=b_1x_i^1+b_2x_i^2,\\
\, [x_i^2,e_2]=a_3x_i^1-a_1x_i^2, & [x_i^2,f_2]=b_3x_i^1-b_1x_i^2,\\
\, [x_i^1,h_2]=(a_2b_3-a_3b_2)x_i^1+2(a_1b_2-a_2b_1)x_i^2,&\\
\, [x_i^2,h_2]=2(a_3b_1-a_1b_3)x_i^1-(a_2b_3-a_3b_2)x_i^2,&\\
 \end{array}
\end{equation} with $0\leq i\leq m$ and the relations $a_1^2+a_2a_3=b_1^2+b_2b_3=0.$

Taking the following basis transformation:
$$x_i^{1'}=Ax_i^1+Bx_i^2, \ x_i^{2'}=(Aa_1+Ba_3)x_i^1+(Aa_2-Ba_1)x_i^2, \ 0\leq i\leq m$$
we can assume that the products (\ref{eq666}) have the following form:
$$\begin{array}{ll}
\, [x_i^1,e_2]=x_i^2,& [x_i^2,e_2]=0,\\
\, [x_i^1,f_2]=b_1x_i^1+b_1^2x_i^2,& [x_i^2,f_2]=-x_i^1-b_1x_i^2,\\
\, [x_i^1,h_2]=-x_i^1-2b_1x_i^2,& [x_i^2,h_2]=x_i^2.\\
 \end{array}$$

Applying the change of basis as follows $$x_i^{1'}=x_i^1+b_1x_i^2, \ x_i^{2'}=x_i^2, \ \ 0\leq i\leq m,$$
we complete the proof of the theorem.
\end{proof}

The following theorem establishes that condition (c) can be
omitted because if conditions (a)-(b) are true, then condition
(c) is always executable.

\begin{thm} Let $L$ be a Leibniz algebra satisfying the conditions (a)-(b). Then either $L$ satisfies the condition (c) or $L\cong (sl_2^1+I)\oplus sl_2^2$.
\end{thm}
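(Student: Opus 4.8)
The plan is to read off the right action of $sl_2^2$ on $I$ from the products (\ref{eq333}) together with the relations (\ref{eq777}), and to classify this action as an $sl_2^2$-module. For each fixed $i$ with $0\leq i\leq m$ set $V_i=\langle x_i^1,x_i^2\rangle$. Formulas (\ref{eq333}) show that each $V_i$ is invariant under right multiplication by $e_2,f_2,h_2$, and that the matrices of these operators on $V_i$ are the traceless matrices determined by $a_1,a_2,a_3,b_1,b_2,b_3$; crucially, these matrices are independent of $i$. Hence, as a module over $sl_2^2$, $I$ is isomorphic to $m+1$ copies of a single two-dimensional $sl_2^2$-module $M=\langle u,v\rangle$ via $x_i^1\mapsto u$, $x_i^2\mapsto v$. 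The left action plays no role here, for reasons explained below.

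First I would pin down $M$. The right-multiplication operators define a Lie algebra anti-homomorphism $sl_2^2\to \mathrm{End}(M)$, which is exactly the content of the Leibniz identity applied to triples $(x,g_1,g_2)$ with $g_1,g_2\in sl_2^2$, since it yields $R_{[g_1,g_2]}=-[R_{g_1},R_{g_2}]$. As $sl_2^2$ is simple, its image is either $0$ or isomorphic to $sl_2$. By Weyl's theorem (Theorem \ref{thm25}) the module $M$ is completely reducible, hence a direct sum of irreducible $sl_2^2$-modules of total dimension $2$; and since $sl_2^2=[sl_2^2,sl_2^2]$ admits no nontrivial one-dimensional module, either $M$ is trivial (image $0$) or $M$ is the unique two-dimensional irreducible $sl_2^2$-module (image $\cong sl_2$), with no intermediate possibility.

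If $M$ is irreducible, then each $V_i$ is an irreducible $sl_2^2$-submodule of dimension $2$, and the decomposition $I=V_0\oplus\dots\oplus V_m$ exhibits $I$ as a direct sum of $m+1$ irreducible $sl_2^2$-modules $I_{2,k}=V_{k-1}$, which is precisely condition (c). If $M$ is trivial, then $[I,sl_2^2]=0$; for the left products, the Leibniz identity applied to $(a,b,b)$ gives $[a,[b,b]]=0$ for all $a,b\in L$, so by polarization $[L,I]=0$ and in particular $[sl_2^2,I]=0$. Combining this with $[sl_2^1,sl_2^2]=[sl_2^2,sl_2^1]=0$ from Lemma \ref{lem2}, the subalgebra $sl_2^2$ becomes a two-sided ideal that both-sidedly annihilates the complementary subalgebra $sl_2^1+I$, whence $L\cong(sl_2^1+I)\oplus sl_2^2$. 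I expect the main obstacle to be the rigidity step of the middle paragraph, namely excluding a reducible-but-nonzero two-dimensional action (a trivial summand paired with a nontrivial one); this is exactly what complete reducibility together with the absence of nontrivial one-dimensional $sl_2$-modules rules out, forcing any nonzero action to be irreducible. Once this dichotomy is in place, checking that the left multiplications vanish so that $sl_2^2$ genuinely splits off is routine.
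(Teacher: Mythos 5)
Your proof is correct, but it reaches the key dichotomy by a genuinely different mechanism than the paper. The paper argues by contradiction: assuming (c) fails while not all slices are reducible, it selects one irreducible slice $I_{2,2}$ and one reducible slice $I_{2,1}$, imports the normalized products from the proof of Theorem \ref{thm2} (namely $[x_1^1,e_2]=[x_1^2,h_2]=x_1^2$, $[x_1^1,h_2]=[x_1^2,f_2]=-x_1^1$, $[x_1^2,e_2]=[x_1^1,f_2]=0$), and then transports the trivial-module vector $y_1^1=\alpha_0^1x_0^1+\alpha_0^2x_0^2$ along $f_1$, expanding $[y_1^1,[e_2,f_1]]=0$ and $[y_1^1,[f_2,f_1]]=0$ to force $\alpha_0^1=\alpha_0^2=0$, i.e.\ $y_1^1=0$ --- a contradiction. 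You instead exploit the fact, already contained in Proposition \ref{prop1} and the products (\ref{eq333})--(\ref{eq777}), that the right $sl_2^2$-action is given by the same matrices on every slice $V_i=\langle x_i^1,x_i^2\rangle$, so that $I\cong M^{\oplus(m+1)}$ for a single two-dimensional module $M$; the classification of two-dimensional $sl_2$-modules (Weyl's Theorem \ref{thm25} together with the absence of nontrivial one-dimensional modules over the perfect algebra $sl_2$) then rules out the mixed case abstractly, making the impossibility of one irreducible and one reducible slice a structural fact rather than the outcome of a computation. This is arguably cleaner, and it avoids the slightly awkward step where the paper borrows normalizations from the proof of a theorem whose hypotheses included condition (c). Two small touch-ups: Lemma \ref{lem2} is stated under hypotheses (i)--(ii) of Section 3, so in the splitting step you should instead invoke Barnes' Levi theorem (Theorem \ref{t2}) directly to choose a semisimple subalgebra $sl_2^1\oplus sl_2^2$ of $L$, which yields $[sl_2^1,sl_2^2]=[sl_2^2,sl_2^1]=0$ by the same trivial argument; and for $[L,I]=0$ no polarization is really needed, since the Leibniz identity gives $[a,[x,x]]=0$ outright and, as the paper notes citing \cite{Bar}, the ideal $I$ coincides with the linear span of squares.
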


\begin{proof} Let $L$ be a Leibniz algebra satisfing conditions $(a)$ and $(b),$ but not
$(c)$. In order to prove the assertion of theorem we have to establish that all modules $I_{2,i}, \ 1 \leq i \leq m+1$ are reducible over $sl_2^2.$ Indeed, according to Theorem \ref{thm25} we conclude that $I_{2,i}$ are
completely reducible modules over $sl_2^2$. In denotation of (\ref{eq4444}) we have $I_{2,i}=<y_i^1>\oplus<y_i^2>,$ where $<y_i^1>, \ <y_i^2>$ are one-dimensional trivial $sl_2^2$-modules, that
is, $$[y_i^1,e_2]=[y_i^2,e_2]=[y_i^1,f_2]=[y_i^2,f_2]=[y_i^1,h_2]=[y_i^2,h_2]=0.$$

We shall prove by contrary method, that is, we shall assume that not all modules $I_{2,i}$ are reducible over $sl_2^2.$ Then we can assume that there exist some $s, \ 1 \leq s \leq m+1$ and
$t, \ 1\leq t\leq m+1, \ t\neq s$ such that $I_{2,s}$ is irreducible, but $I_{2,t}$ is reducible $sl_2^2$-modules. By renumerating of indexes, without loss of generality, we can suppose $s=2$ and
$t=1.$

From the products in the proof of Theorem \ref{thm2} we have
$$[x_{1}^1,e_2]=[x_{1}^2,h_2]=x_{1}^2,\quad [x_{1}^1,h_2]=[x_{1}^2,f_2]=-x_{1}^1, \quad [x_{1}^2, e_2]=[x_{1}^1,f_2]=0.$$

Consider the chain of equalities
$$0=[y_1^1,[e_2,f_1]]=[[y_1^1,e_2],f_1]-[[y_1^1,f_1],e_2]=-[[y_1^1,f_1],e_2]=$$
$$-[[\alpha_{0}^1x_{0}^1+\alpha_{0}^2x_{0}^2,f_1],e_2]=
-[\alpha_{0}^1x_{1}^1+\alpha_{0}^2x_{1}^2,e_2]=-\alpha_{0}^1x_1^2,$$

$$0=[y_1^1,[f_2,f_1]]=[[y_1^1,f_2],f_1]-[[y_1^1,f_1],f_2]=-[[y_1^1,f_1],f_2]=$$
$$-[[\alpha_{0}^1x_{0}^1+\alpha_{0}^2x_{0}^2,f_1],e_2]=-
[\alpha_{0}^1x_{1}^1+\alpha_{0}^2x_{1}^2,f_2]=\alpha_{0}^2x_1^1.$$

Therefore, $\alpha_{0}^1=\alpha_{0}^2=0,$ which means $y_1^1=0$. Thus, we get a contradiction.
\end{proof}

\section{Some remarks on Leibniz algebras with semisimple part $sl_n.$}

In this section we present the structure of Leibniz algebras with associated Lie algebra $sl_n$ and with dimension of ideal $I$ equal to 1 and 2.

\begin{prop} Let $L/I$ be isomorphic to $G$ where $G=<e_1,e_2,...,e_m>$ is a semisimple Lie algebra and $dimI=1.$ Then
$L=G\oplus I.$
\end{prop}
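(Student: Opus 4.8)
The plan is to reduce the statement to understanding how a Levi subalgebra acts on the one-dimensional ideal $I$, and then to show that both the left and right products between them vanish.

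First I would apply Levi's Theorem (Theorem \ref{t2}) to write $L = S \dot{+} R$, where $S$ is a semisimple Lie subalgebra and $R$ is the solvable radical. The key preliminary observation is that $R = I$. Indeed, $I$ is abelian and contained in the radical, so $I \subseteq R$; conversely, the image of $R$ under the projection $L \to L/I \cong G$ is a solvable ideal of the semisimple Lie algebra $G$, hence zero, which gives $R \subseteq I$. Thus $R = I$ and $S \cong L/I \cong G$, so it remains only to analyze $L = S \dot{+} I$ with $\dim I = 1$.

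Next I would eliminate the two families of mixed products. For the left products $[L, I]$, I would use the Leibniz identity with a repeated argument: setting $z = y$ in $[x,[y,z]] = [[x,y],z] - [[x,z],y]$ yields $[x,[y,y]] = 0$ for all $x, y \in L$. Since $I$ is exactly the span of the squares $[y,y]$ (as recalled after Theorem \ref{t2}), linearity gives $[L, I] = 0$; in particular $[S, I] = 0$ and $[I, I] = 0$, so $I$ is a trivial one-dimensional ideal on the left. For the right products $[I, S]$, I would note that the Leibniz identity makes $I$ a right $S$-module, and since $\dim I = 1$ this is a one-dimensional representation of the semisimple Lie algebra $S$; the associated map $S \to \mathbb{C}$ is a Lie algebra homomorphism into an abelian algebra, and because $S = [S,S]$ it must be zero (equivalently, by Weyl's Theorem \ref{thm25} the module is completely reducible, and the only one-dimensional module over a semisimple Lie algebra is trivial). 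Hence $[I, S] = 0$.

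Combining these two vanishing statements, $S$ and $I$ annihilate each other on both sides, so the semidirect sum degenerates into a direct sum of ideals: $L = S \oplus I \cong G \oplus I$, as claimed. I expect the only genuinely substantive step to be the triviality of the right action $[I, S] = 0$, that is, recognizing that a one-dimensional module over a semisimple Lie algebra is necessarily trivial; by contrast, the vanishing of the left products $[L, I]$ is an immediate formal consequence of the Leibniz identity, and the identification $R = I$ is routine.
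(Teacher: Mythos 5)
Your proof is correct and its decisive step is the same as the paper's: the one-dimensional right action of the semisimple part on $I$ must vanish because it factors through the abelianization and $[G,G]=G$ --- the paper carries this out concretely by writing $e_i=\sum_{p,q}\beta_{p,q}[e_p,e_q]$, setting $[x,e_i]=\alpha_i x$, and expanding with the Leibniz identity to get $\sum_{p,q}\beta_{p,q}(\alpha_p\alpha_q-\alpha_q\alpha_p)x=0$. The extra scaffolding you supply (Levi decomposition with $R=I$, and $[L,I]=0$ from $[x,[y,y]]=0$ since $I$ is spanned by squares) is left implicit in the paper but is exactly what justifies its setup, so your write-up is a more complete rendering of the same argument rather than a different route.
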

\begin{proof} We put $I=<x>$ and $[x,e_i]=\alpha_ix.$ For the semisimple Lie algebra $G$ we have $[G,G]=G$, that is, for any $e_i, \ 1\leq i \leq m$ there exist $e_p$ and $e_q$ such that $e_i=\sum\limits_{p,q}\beta_{p,q}[e_p,e_q].$

The proof of proposition completes the following chain of equalities:
$$[x,e_i]=[x,\sum\limits_{p,q}\beta_{p,q}[e_p,e_q]]=\sum\limits_{p,q}\beta_{p,q}[x,[e_p,e_q]]=
\sum\limits_{p,q}\beta_{p,q}([[x,e_p],e_q]-[[x,e_q],e_p])=$$
$$\sum\limits_{p,q}\beta_{p,q}(\alpha_p[x,e_q]-\alpha_q[x,e_p])=\sum\limits_{p,q}\beta_{p,q}(\alpha_p\alpha_qx-\alpha_q\alpha_px)=0.$$
\end{proof}

Let $G$ be a simple Lie algebra with a basis $\{e_1, e_2, \dots e_m\}$ which satisfies the condition that for any $e_i$ there exist $p,q$ such that $e_i=[e_p,e_q].$

We consider the case of $L/I\cong G$ and $dimI=2.$
We set $\{x,y\}$ the basis elements of $I$ and
$$[x,e_i]=\alpha_ix+\beta_iy, \quad [y,e_i]=\gamma_ix+\delta_iy, \quad 1\leq i\leq n.$$

Consider
$$[x,e_i]=[x,[e_p,e_q]]=[[x,e_p],e_q]-[[x,e_q],e_p]=(\beta_p\gamma_q-\beta_q\gamma_p)x+(\alpha_p\beta_q+\beta_p\delta_q-\alpha_q\beta_p-\beta_q\delta_p)y.$$

On the other hand, $[x,e_i]=\alpha_ix+\beta_iy.$

Comparing the coefficients we derive
\begin{equation}\label{eq5}
\left\{\begin{array}{l} \alpha_i=\beta_p\gamma_q-\beta_q\gamma_p,  \\[1mm] \beta_i=\alpha_p\beta_q+\beta_p\delta_q-\alpha_q\beta_p-\beta_q\delta_p, \end{array} \right.
\end{equation}
where $1\leq i\leq n.$

Similarly, if we consider $[y,e_i],$ we deduce

\begin{equation} \label{eq55}\left\{\begin{array}{l} \gamma_i=\gamma_p\alpha_q+\delta_p\gamma_q-\gamma_q\alpha_p-\delta_q\gamma_p,  \\[1mm] \delta_i=\gamma_p\beta_q-\gamma_q\beta_p, \end{array} \right.
\end{equation}
where $1\leq i\leq n.$

From systems (\ref{eq5}) and (\ref{eq55}) we obtain
\begin{equation}\label{eq555}
[x,e_i]=\alpha_ix+\beta_iy, \quad [y,e_i]=\gamma_ix-\alpha_iy, \quad 1\leq i\leq n.
\end{equation}

Let $L/I$ be isomorphic to $sl_n$. From \cite{Jac} we have the standard basis  of $sl_n$ $<h_k,e_{i,j}>$ with $1\leq k\leq n-1,$ $ 1\leq i,j\leq n$ and $ i\neq j$. We recall the table of multiplication of $sl_n$
$$\begin{array}{llll}
[e_{ij},e_{jk}]=e_{ik},& &	& 1\leq i,j,k\leq n,   i\neq j, \quad j\neq k, \quad k\neq i,\\{}
[h_k,e_{in}]=e_{in},	& [h_k,e_{ni}]=-e_{ni},	& & 1\leq i,k\leq n-1, \quad k\neq i,\\{}
[h_i,e_{ij}]=e_{ij},	&  [h_j,e_{ij}]=-e_{ij},	& [e_{ij},e_{ji}]=h_i-h_j, & 1\leq i,j\leq n-1,\\{}
[h_i,e_{in}]=2e_{in},	& [h_i,e_{ni}]=-2e_{ni}, & [e_{in},e_{ni}]=h_i , & 1\leq i\leq n-1.
\end{array}$$

\begin{thm} Let $L/I$ be isomorphic to $sl_n\ (n\geq3),$ where $dimI=2.$ Then $L=sl_n\oplus I.$
\end{thm}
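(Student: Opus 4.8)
The plan is to interpret the right action of $sl_n$ on $I$ as a two-dimensional representation and to annihilate it using the simplicity of $sl_n$ together with a dimension count. Before that I would dispose of the left products: applying the Leibniz identity to the triple $(a,b,b)$ gives $[a,[b,b]]=[[a,b],b]-[[a,b],b]=0$ for all $a,b\in L$, and since $I$ is exactly the span of the squares $[b,b]$ this yields $[L,I]=0$; in particular $[sl_n,I]=0$. Hence the claim $L=sl_n\oplus I$ is equivalent to the vanishing of all right products $[I,sl_n]$, that is, to $\alpha_i=\beta_i=\gamma_i=0$ for every basis vector $e_i$ of $sl_n$ in the notation of (\ref{eq555}).

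Next I would package these right products into a representation. Define a linear map $\rho\colon L\to\mathfrak{gl}(I)\cong\mathfrak{gl}_2$ by $\rho(a)\,m=-[m,a]$ for $m\in I$. A direct expansion of the Leibniz identity $[m,[a,b]]=[[m,a],b]-[[m,b],a]$ shows $\rho([a,b])=[\rho(a),\rho(b)]$, so $\rho$ is a Lie-algebra homomorphism; moreover $\rho(m)=0$ for $m\in I$ because $I$ is abelian, hence $\rho$ factors through $L/I\cong sl_n$. Reading off (\ref{eq555}), the matrix of $m\mapsto[m,e_i]$ in the basis $\{x,y\}$ is $\left(\begin{smallmatrix}\alpha_i&\gamma_i\\ \beta_i&-\alpha_i\end{smallmatrix}\right)$, which is trace-free; equivalently $\rho(sl_n)=\rho([sl_n,sl_n])\subseteq[\mathfrak{gl}_2,\mathfrak{gl}_2]=sl_2$, so the image of $\rho$ has dimension at most $3$.

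The finishing step is a dimension argument. Since $sl_n$ is simple, $\ker\rho$ is either $0$ or all of $sl_n$. If $\ker\rho=0$ then $\rho$ is injective and $\dim\rho(sl_n)=n^2-1\ge 8$, contradicting $\dim\rho(sl_n)\le\dim sl_2=3$ for $n\ge 3$. Therefore $\ker\rho=sl_n$, i.e. $\rho=0$, so every operator $m\mapsto[m,e_i]$ is zero and all right products $[I,sl_n]$ vanish. Combined with $[sl_n,I]=0$ this gives $L=sl_n\oplus I$.

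I expect the only point requiring care to be the verification that $\rho$ is a genuine Lie-algebra morphism whose image lands in $sl_2$ (well-definedness on $L/I$, together with the trace-free observation); once this is in place the conclusion is immediate and, notably, it bypasses any manipulation of the structure constants of $sl_n$. If one prefers an argument through complete reducibility, Theorem \ref{thm25} decomposes $I$ into irreducible $sl_n$-submodules of dimension at most $2$, but every nontrivial irreducible $sl_n$-module has dimension at least $n\ge 3$, so each summand is the trivial one-dimensional module and again $\rho=0$.
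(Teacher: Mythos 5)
Your proof is correct, and it takes a genuinely different route from the paper's. The paper argues by explicit computation in the Chevalley basis of $sl_n$: starting from the normal form (\ref{eq555}), it applies the Leibniz identity to the triples $(x,h_i,e_{ij})$, $(x,h_i,e_{in})$, $(x,h_i,e_{ni})$ and their $y$-analogues to obtain the three linear systems (\ref{eq6})--(\ref{eq666}), observes that the determinants $1-4(\alpha_i^2+\beta_i\gamma_i)$ and $\pm\bigl(2-2(\alpha_i^2+\beta_i\gamma_i)\bigr)$ cannot vanish simultaneously, and then finishes by a two-case analysis using identities such as $[x,h_i]=[x,[e_{in},e_{ni}]]$ to kill the remaining structure constants. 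You replace all of this with one conceptual observation: the right multiplications define a Lie algebra homomorphism $\bar\rho\colon L/I\cong sl_n\to\mathfrak{gl}_2$ (it kills $I$ since $\bar\rho$ of any square is $[\rho(a),\rho(a)]=0$, or by abelianity of $I$, as you note), whose image lies in $[\mathfrak{gl}_2,\mathfrak{gl}_2]=sl_2$ by perfectness of $sl_n$; simplicity of $sl_n$ and $n^2-1>3$ then force $\bar\rho=0$. Your preliminary step, $[L,I]=0$ via the triple $(a,b,b)$ and the fact that $I$ is the linear span of squares, is sound and in fact makes explicit something the paper leaves tacit (its proof only addresses the right products $[I,sl_n]$); the only other implicit ingredient, shared with the paper, is Barnes' Levi theorem (Theorem \ref{t2}) to realize $sl_n$ as a subalgebra of $L$ so that the direct-sum conclusion makes sense. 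What each approach buys: the paper's computation is self-contained and consistent with its basis-by-basis methodology, producing the explicit constraint equations; yours is shorter, pinpoints exactly where $n\geq3$ enters (for $n=2$ the map $sl_2\to sl_2$ can be an isomorphism, which is why nontrivial algebras exist there), and generalizes immediately to any simple Lie algebra not isomorphic to $sl_2$ acting on $I$ with $\dim I\leq2$ --- indeed your alternative ending via Weyl's theorem (Theorem \ref{thm25}) and the bound $\dim\geq n$ on nontrivial irreducible $sl_n$-modules also recovers the paper's $\dim I=1$ proposition as a special case.
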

\begin{proof} From (\ref{eq555}) we have
$$[x,h_i]=\alpha_ix+\beta_iy, 1\leq i\leq n-1,\quad [y,h_i]=\gamma_ix-\alpha_iy, \quad 1\leq i\leq n-1,$$
$$[x,e_{ij}]=\alpha_{ij}x+\beta_{ij}y, \quad [y,e_{ij}]=\gamma_{ij}x-\alpha_{ij}y, \quad 1\leq i,j\leq n, \quad i\neq j.$$

Applying Leibniz identity for the following triples of elements
$$(x, h_i, e_{ij}), \quad (y, h_i, e_{ij}), \quad (x,h_i,e_{in}), \quad (y,h_i,e_{in}), \quad (x,h_i,e_{ni}), \quad (y,h_i,e_{ni}).$$
we deduce
\begin{equation}\label{eq6}
\left\{\begin{array}{l} \alpha_{ij}+\gamma_i\beta_{ij}-\beta_i\gamma_{ij}=0,  \\[1mm] 2\beta_i\alpha_{ij}+(1-2\alpha_i)\beta_{ij}=0, \\[1mm] -2\gamma_i\alpha_{ij}+(1+2\alpha_i)\gamma_{ij}=0, \end{array} \right.
\end{equation}
\begin{equation}\label{eq66}\left\{\begin{array}{l} 2\alpha_{in}+\gamma_i\beta_{in}-\beta_i\gamma_{in}=0,  \\[1mm] \beta_i\alpha_{in}+(1-\alpha_i)\beta_{in}=0, \\[1mm] -\gamma_i\alpha_{in}+(1+\alpha_i)\gamma_{in}=0, \end{array} \right.
\end{equation}
\begin{equation}\label{eq666}\left\{\begin{array}{l} -2\alpha_{ni}+\gamma_i\beta_{ni}-\beta_i\gamma_{ni}=0,  \\[1mm] \beta_i\alpha_{ni}-(\alpha_i+1)\beta_{ni}=0, \\[1mm]
-\gamma_i\alpha_{ni}+(\alpha_i-1)\gamma_{ni}=0.
\end{array} \right.
\end{equation}

Determinants of systems (\ref{eq6}) - (\ref{eq666}) have the following values:
$$\begin{array}{l}
Det\left( \begin{array}{ccc} 1 & \gamma_i & -\beta_i \\
2\beta_i & 1-2\alpha_i & 0 \\
-2\gamma_i & 0 & 1+2\alpha_i \\
 \end{array} \right)=1-4(\alpha_i^2+\beta_i\gamma_i),\\[5mm]
Det\left( \begin{array}{ccc} 2 & \gamma_i & -\beta_i \\
\beta_i & 1-\alpha_i & 0 \\ -\gamma_i & 0 & 1+\alpha_i \\
\end{array} \right)=2-2(\alpha_i^2+\beta_i\gamma_i),\\[5mm]
Det\left( \begin{array}{ccc} -2 & \gamma_i & -\beta_i \\ \beta_i & -1-\alpha_i & 0 \\ -\gamma_i & 0 & \alpha_i-1 \\ \end{array} \right)=-2+2(\alpha_i^2+\beta_i\gamma_i).
\end{array}$$

It is easy to see that these determinants do not equal to zero simultaneously. Therefore, we conclude that
either $$\alpha_{in}=\beta_{in}=\gamma_{in}=\alpha_{ni}=\beta_{ni}=\gamma_{ni}=0, \ 1\leq i\leq n-1$$ or $$\alpha_{ij}=\beta_{ij}=\gamma_{ij}=0, \ 1\leq i,j\leq n-1.$$

\noindent \textbf{Case 1.} Let $\alpha_{in}=\beta_{in}=\gamma_{in}=\alpha_{ni}=\beta_{ni}=\gamma_{ni}=0$ be where $1\leq i\leq n-1.$ Then the non-zero products characterized $[I,sl_n]$ are the following
$$[x,h_i]=\alpha_ix+\beta_iy, \quad [y,h_i]=\gamma_ix-\alpha_iy, \quad 1\leq i\leq n-1,$$
$$[x,e_{ij}]=\alpha_{ij}x+\beta_{ij}y, \quad [y,e_{ij}]=\gamma_{ij}x-\alpha_{ij}y, \quad 1\leq i,j\leq n-1, \quad i\neq j.$$

From the equalities
$$[x,h_i]=[x,[e_{in},e_{ni}]]=[[x,e_{in}],e_{ni}]-[[x,e_{ni}],e_{in}]=0,$$
$$[y,h_i]=[y,[e_{in},e_{ni}]]=[[y,e_{in}],e_{ni}]-[[y,e_{ni}],e_{in}]=0,$$
$$[x,e_{ij}]=[x,[e_{in},e_{nj}]]=[[x,e_{in}],e_{nj}]-[[x,e_{nj}],e_{ni}]=0,$$
$$[y,e_{ij}]=[y,[e_{in},e_{nj}]]=[[y,e_{in}],e_{nj}]-[[y,e_{nj}],e_{ni}]=0,$$
we obtain that $[I,sl_n]=0.$

\noindent \textbf{Case 2.} Let $\alpha_{ij}=\beta_{ij}=\gamma_{ij}=0$ be where $1\leq i,j\leq n-1.$ Then we have
$$\begin{array}{lll}
[x,h_i]=\alpha_ix+\beta_iy, & [y,h_i]=\gamma_ix-\alpha_iy, & 1\leq i\leq n-1,\\{}
[x,e_{in}]=\alpha_{in}x+\beta_{in}y, & [y,e_{in}]=\gamma_{in}x-\alpha_{in}y, & 1\leq i\leq n-1,\\{}
[x,e_{ni}]=\alpha_{ni}x+\beta_{ni}y, & [y,e_{ni}]=\gamma_{ni}x-\alpha_{ni}y, & 1\leq i\leq n-1,\\{}
[x,e_{ij}]=[y,e_{ij}]=0, & & 1\leq i,j\leq n-1, \quad i\neq j.
\end{array}$$

Using Leibniz identity for the following triples
$$\begin{array}{llll}
(x,e_{ij},e_{ji}), & (y,e_{ij},e_{ji}), & (x,h_j,e_{in}), & (y,h_j,e_{in}),\\
(x,h_j,e_{ni}), & (y,h_j,e_{ni}), & (x,e_{in},e_{ni}), & (y,e_{in},e_{ni}),
\end{array}$$
we obtain
$$ [x,h_i]=[y,h_i]=[x,e_{in}]=[y,e_{in}]=[x,e_{ni}]=[y,e_{ni}]=0, \quad 1\leq i\leq n-1.$$
\end{proof}

\section*{Acknowledgements} The authors are sincerely grateful to anonymous referees for critical remarks.

\end{document}